\newtheorem{theorem}{Theorem}[section]
\newtheorem{prop}[theorem]{Proposition}
\newtheorem{lemma}[theorem]{Lemma}
\newtheorem{prop-def}{Proposition-Definition}[section]
\theoremstyle{definition}
\newtheorem{defn}[theorem]{Definition}
\newtheorem{remark}[theorem]{Remark}
\newtheorem{exam}[theorem]{Example}
\newcommand{\nc}{\newcommand}
\newcommand {\emptycomment}[1]{}
\nc{\delete}[1]{{}}
\nc{\mmargin}[1]{}
\nc{\mlabel}[1]{\label{#1}}  
\nc{\mcite}[1]{\cite{#1}}  
\nc{\mref}[1]{\ref{#1}}  
\nc{\meqref}[1]{\eqref{#1}}  
\nc{\mbibitem}[1]{\bibitem{#1}} 
	\nc{\mlabel}[1]{\label{#1}  
		{\hfill \hspace{1cm}{\bf{{\ }\hfill(#1)}}}}
	\nc{\mcite}[1]{\cite{#1}{{\bf{{\ }(#1)}}}}  
	\nc{\mref}[1]{\ref{#1}{{\bf{{\ }(#1)}}}}  
	\nc{\meqref}[1]{\eqref{#1}{{\bf{{\ }(#1)}}}}  
	\nc{\mbibitem}[1]{\bibitem[\bf #1]{#1}} 
\newcommand{\huaH}{\mathcal{H}}
\newcommand{\huaX}{\mathcal{X}}
\newcommand{\huaY}{\mathcal{Y}}
\newcommand{\huaO}{{\mathcal{OT}}}
\newcommand{\huaF}{{\mathcal{OF}}}
\newcommand{\bk}{{\mathbf{k}}}
\nc{\vep}{\varepsilon}
\nc{\bin}[2]{ (_{\stackrel{\scs{#1}}{\scs{#2}}})}  
\nc{\binc}[2]{(\!\! \begin{array}{c} \scs{#1}\\
		\scs{#2} \end{array}\!\!)}  
\nc{\bincc}[2]{  ( {\scs{#1} \atop
		\vspace{-1cm}\scs{#2}} )}  
\nc{\oline}[1]{\overline{#1}}
\nc{\mapm}[1]{\lfloor\!|{#1}|\!\rfloor}
\nc{\bs}{\bar{S}}
\nc{\cast}{{\,\mbox{\raisebox{.8pt}{$\scriptstyle \circledast$}}\,}}
\nc{\la}{\longrightarrow}
\nc{\ot}{\otimes}
\nc{\rar}{\rightarrow}
\nc{\dar}{\downarrow}
\nc{\dap}[1]{\downarrow \rlap{$\scriptstyle{#1}$}}
\nc{\defeq}{\stackrel{\rm def}{=}}
\nc{\dis}[1]{\displaystyle{#1}}
\nc{\dotcup}{\ \displaystyle{\bigcup^\bullet}\ }
\nc{\hcm}{\ \hat{,}\ }
\nc{\hts}{\hat{\otimes}}
\nc{\hcirc}{\hat{\circ}}
\nc{\lleft}{[}
\nc{\lright}{]}
\nc{\curlyl}{\left \{ \begin{array}{c} {} \\ {} \end{array}
	\right .  \!\!\!\!\!\!\!}
\nc{\curlyr}{ \!\!\!\!\!\!\!
	\left . \begin{array}{c} {} \\ {} \end{array}
	\right \} }
\nc{\longmid}{\left | \begin{array}{c} {} \\ {} \end{array}
	\right . \!\!\!\!\!\!\!}
\nc{\ora}[1]{\stackrel{#1}{\rar}}
\nc{\ola}[1]{\stackrel{#1}{\la}}
\nc{\scs}[1]{\scriptstyle{#1}} \nc{\mrm}[1]{{\rm #1}}
\nc{\dirlim}{\displaystyle{\lim_{\longrightarrow}}\,}
\nc{\invlim}{\displaystyle{\lim_{\longleftarrow}}\,}
\nc{\dislim}[1]{\displaystyle{\lim_{#1}}} \nc{\colim}{\mrm{colim}}
\nc{\mvp}{\vspace{0.3cm}} \nc{\tk}{^{(k)}} \nc{\tp}{^\prime}
\nc{\ttp}{^{\prime\prime}} \nc{\svp}{\vspace{2cm}}
\nc{\vp}{\vspace{8cm}}
\nc{\modg}[1]{\!<\!\!{#1}\!\!>}
\nc{\intg}[1]{F_C(#1)}
\nc{\lmodg}{\!<\!\!}
\nc{\rmodg}{\!\!>\!}
\nc{\cpi}{\widehat{\Pi}}
\nc{\labs}{\mid\!}
\nc{\rabs}{\!\mid}
\nc{\btr}{\blacktriangleright}
\nc{\ad}{\mrm{ad}}
\nc{\rRB}{\mathsf{rRB}}
\nc{\cocrRB}{\mathsf{cocrRB}}
\nc{\PH}{\mathsf{PH}}
\nc{\cocPH}{\mathsf{cocPH}}
\nc{\ann}{\mrm{ann}}
\nc{\Aut}{\mrm{Aut}}
\nc{\Der}{\mrm{Der}}
\nc{\Sym}{\mrm{Sym}}
\nc{\br}{\mrm{bre}}
\nc{\can}{\mrm{can}}
\nc{\Cont}{\mrm{Cont}}
\nc{\rchar}{\mrm{char}}
\nc{\cok}{\mrm{coker}}
\nc{\de}{\mrm{dep}}
\nc{\dtf}{{R-{\rm tf}}}
\nc{\dtor}{{R-{\rm tor}}}
\nc{\Dif}{\mrm{Diff}}
\nc{\Div}{\mrm{Div}}
\nc{\End}{\mrm{End}}
\nc{\Ext}{\mrm{Ext}}
\nc{\Fil}{\mrm{Fil}}
\nc{\Fr}{\mrm{Fr}}
\nc{\Frob}{\mrm{Frob}}
\nc{\Gal}{\mrm{Gal}}
\nc{\GL}{\mrm{GL}}
\nc{\Gr}{\mrm{Gr}}
\nc{\Hom}{\mrm{Hom}}
\nc{\Hoch}{\mrm{Hoch}}
\nc{\hsr}{\mrm{H}}
\nc{\hpol}{\mrm{HP}}
\nc{\id}{\mrm{id}}
\nc{\im}{\mrm{im}}
\nc{\inv}{\mrm{inv}}
\nc{\Id}{\mrm{Id}}
\nc{\ID}{\mrm{ID}}
\nc{\Irr}{\mrm{Irr}}
\nc{\incl}{\mrm{incl}}
\nc{\length}{\mrm{length}}
\nc{\NLSW}{\mrm{NLSW}}
\nc{\Lie}{\mrm{Lie}}
\nc{\mchar}{\rm char}
\nc{\mpart}{\mrm{part}}
\nc{\ql}{{\QQ_\ell}}
\nc{\qp}{{\QQ_p}}
\nc{\rank}{\mrm{rank}}
\nc{\rcot}{\mrm{cot}}
\nc{\rdef}{\mrm{def}}
\nc{\rdiv}{{\rm div}}
\nc{\rtf}{{\rm tf}}
\nc{\rtor}{{\rm tor}}
\nc{\res}{\mrm{res}}
\nc{\SL}{\mrm{SL}}
\nc{\Spec}{\mrm{Spec}}
\nc{\tor}{\mrm{tor}}
\nc{\Tr}{\mrm{Tr}}
\nc{\tr}{\mrm{tr}}
\nc{\wt}{\mrm{wt}}
\nc{\bfk}{{\bf k}}
\nc{\bfone}{{\bf 1}}
\nc{\bfzero}{{\bf 0}}
\nc{\detail}{\marginpar{\bf More detail}
	\noindent{\bf Need more detail!}
	\svp}
\nc{\gap}{\marginpar{\bf Incomplete}\noindent{\bf Incomplete!!}
	\svp}
\nc{\FMod}{\mathbf{FMod}}
\nc{\Int}{\mathbf{Int}}
\nc{\Mon}{\mathbf{Mon}}
\nc{\remarks}{\noindent{\bf Remarks: }}
\nc{\Rep}{\mathbf{Rep}}
\nc{\Rings}{\mathbf{Rings}}
\nc{\Sets}{\mathbf{Sets}}
\nc{\Diff}{\mathbf{Diff}}
\nc{\Inte}{\mathbf{Inte}}
\nc{\U}{\mathrm{U}}
\newcommand{\Ten}{\mathsf{T}}
\nc{\BA}{{\mathbb A}}   \nc{\CC}{{\mathbb C}}
\nc{\DD}{{\mathbb D}}   \nc{\EE}{{\mathbb E}}
\nc{\FF}{{\mathbb F}}   \nc{\GG}{{\mathbb G}}
\nc{\HH}{{\mathbb H}}   \nc{\LL}{{\mathbb L}}
\nc{\NN}{{\mathbb N}}   \nc{\PP}{{\mathbb P}}
\nc{\QQ}{{\mathbb Q}}   \nc{\RR}{{\mathbb R}}
\nc{\TT}{{\mathbb T}}   \nc{\VV}{{\mathbb V}}
\nc{\ZZ}{{\mathbb Z}}   \nc{\TP}{\widetilde{P}}
\nc{\cala}{{\mathcal A}}    \nc{\calc}{{\mathcal C}}
\nc{\cald}{\mathcal{D}}     \nc{\cale}{{\mathcal E}}
\nc{\calf}{{\mathcal F}}    \nc{\calg}{{\mathcal G}}
\nc{\calh}{{\mathcal H}}    \nc{\cali}{{\mathcal I}}
\nc{\call}{{\mathcal L}}    \nc{\calm}{{\mathcal M}}
\nc{\caln}{{\mathcal N}}    \nc{\calo}{{\mathcal O}}
\nc{\calp}{{\mathcal P}}    \nc{\calr}{{\mathcal R}}
\nc{\cals}{{\mathcal S}}    \nc{\calt}{{\Omega}}
\nc{\calv}{{\mathcal V}}    \nc{\calw}{{\mathcal W}}
\nc{\calx}{{\mathcal X}}
\nc{\fraka}{{\mathfrak a}}
\nc{\frakb}{\mathfrak{b}}
\nc{\frakg}{{\frak g}}
\nc{\frakl}{{\frak l}}
\nc{\fraks}{{\frak s}}
\nc{\frakB}{{\frak B}}
\nc{\frakm}{{\frak m}}
\nc{\frakM}{{\frak M}}
\nc{\frakp}{{\frak p}}
\nc{\frakW}{{\frak W}}
\nc{\frakX}{{\frak X}}
\nc{\frakS}{{\frak S}}
\nc{\frakA}{{\frak A}}
\nc{\frakx}{{\frakx}}
\nc{\ynr}[1]{\textcolor{orange}{\underline{Yunnan:}#1 }}
\nc{\lir}[1]{\textcolor{red}{\underline{Li:}#1 }}
	\newtheorem{theorem}{Theorem}[section]
	\newtheorem{lemma}[theorem]{Lemma}
	\theoremstyle{definition}
	\theoremstyle{remark}
	\newtheorem{remark}[theorem]{Remark}
	\numberwithin{equation}{section}
\begin{document}

\title[The sub-adjacent structure on the universal envelop of a post-Lie algebra]{On the sub-adjacent Hopf algebra of the universal enveloping algebra of a post-Lie algebra}

\author{Yunnan Li}
\address{School of Mathematics and Information Science, Guangzhou University,
Guangzhou 510006, China}
\email{ynli@gzhu.edu.cn}

\begin{abstract}
Recently the notion of post-Hopf algebra was introduced, with the universal enveloping algebra of a post-Lie algebra as the fundamental example. A novel property is that any cocommutative post-Hopf algebra gives rise to a sub-adjacent Hopf algebra with a generalized Grossman-Larson product.
By twisting the post-Hopf product, we provide a combinatorial antipode formula for the sub-adjacent Hopf algebra of the universal enveloping algebra of a post-Lie algebra.
Relating to such a sub-adjacent Hopf algebra, we also obtain a closed inverse formula for the Guin-Oudom isomorphism in the context of post-Lie algebras.
Especially as a byproduct, we derive a cancellation-free antipode formula for the Grossman-Larson Hopf algebra of ordered trees through a concrete tree-grafting expression.
\end{abstract}

\keywords{post-Hopf algebra, post-Lie algebra, the Grossman-Larson Hopf algebra, ordered tree\\
\qquad 2020 Mathematics Subject Classification. 16T30, 17B35, 17D25, 05C05}

\maketitle

\tableofcontents

\allowdisplaybreaks

\section{Introduction}

The notion of post-Lie algebra was introduced by Vallette in his study of operad theory \cite{Val}. Such an algebraic structure has important applications in the study of classical Yang-Baxter equation \cite{BGN}, geometric numerical integration \cite{Fard1,Fard2,ML},
and plays an important role recently in the theory of regularity structures and planarly branched rough paths for solving SPDEs \cite{BK,CEMM}. A pre-Lie algebra, seen as a post-Lie algebra in which the Lie algebra is abelian \cite{CL} has its historical origin in algebraic deformation theory, geometry and physics; see~\cite{Bu,Ma} for detailed reviews.

Since then the studies of the universal enveloping algebras of pre-Lie algebras as well as post-Lie algebras have attracted considerable attention. The remarkable work \cite{OG} of Oudom and Guin defined a functor from the category of pre-Lie algebras to that of Hopf algebras by extending the pre-Lie product of a pre-Lie algebra to the symmetric algebra over it.
In \cite{ELM} Ebrahimi-Fard, Lundervold and Munthe-Kaas further extended the Guin-Oudom construction to post-Lie algebras; see also an operadic approach in~\cite{Ra}. Notably, they found a new Hopf algebra structure assembled from the universal enveloping algebra $\U(\frakg)$ of a post-Lie algebra  $(\frakg,\rhd)$. We call it the sub-adjacent Hopf algebra of $\U(\frakg)$ and denote it by $\U(\frakg)_\rhd$.
Recently, the Guin-Oudom construction has become a crucial tool in constructing combinatorial Hopf algebras used to describe rough paths and regularity structures \cite{BK,ER,Ra0}. For a proper definition of combinatorial Hopf algebras, we refer to~\cite[Definition~3.3]{CEMM}.

Also, a post-Lie algebra $\frakg$ has a sub-adjacent Lie algebra structure $\frakg_\rhd$.
A Hopf algebra isomorphism between the universal enveloping algebra $\U(\frakg_\rhd)$ and the sub-adjacent Hopf algebra  $\U(\frakg)_\rhd$ was given in \cite{ELM}, and later shown in \cite{EMM} to be related to the factorization of classical $r$-matrices on Lie algebras. We call such an isomorphism the Guin-Oudom isomorphism in the context of post-Lie algebras.
Particularly, the inverse of the Guin-Oudom isomorphism for the free post-Lie algebra over a magma algebra is identified with Gavrilov's $K$-map introduced in his study \cite{Gav1} on covariant derivatives and the double exponential; see \cite{AEMM,Foissy} for details.

For broader applications of the sub-adjacent Hopf algebra $\U(\frakg)_\rhd$ from a post-Lie algebra $(\frakg,\rhd)$,
we wonder an explicit description for its two main ingredients, namely
\begin{enumerate}[(1)]
\item the  product $\circ $ of $\U(\frakg)_\rhd$,

\item the antipode map $S_\rhd$ of $\U(\frakg)_\rhd$.

\end{enumerate}
The first one generalizes the classical  Grossman-Larson product defined in the context of trees and has important applications in the studies of Magnus expansions~\cite{EMQ} and Lie-Butcher series~\cite{ML,MW}. It was well-studied in~\cite{ELM}.
In this paper we consider the second one and, in particular, the inverse of the Guin-Oudom isomorphism $\phi:\U(\frakg_\rhd)\to\U(\frakg)_\rhd$ from a different perspective, namely the post-Hopf algebra structure.

The notion of post-Hopf algebra was recently introduced in~\cite{LST} by the author, Sheng and Tang, with the universal enveloping algebra of a post-Lie algebra as the prototype.
The study of post-Hopf algebras was initiated from the innovative concept of Rota-Baxter operators on groups introduced in~\cite{GLS}.
Goncharov succeeded in defining Rota-Baxter operators on cocommutative Hopf algebras in~\cite{Go}. One motivation to propose post-Hopf algebra in~\cite{LST} is to generalize Goncharov's Rota-Baxter Hopf algebras. It also generalizes the notion of $D$-bialgebra in~\cite{MQS} for the study of integration of post-Lie algebras, and is further applied to obtain the formal integration
of connected complete post-Lie algebras, namely an intrinsic post-group structure~\cite{BGST}.
Significantly, any cocommutative  post-Hopf algebra gives rise to a sub-adjacent Hopf algebra with a generalized Grossman-Larson product.

In \cite{GL} Grossman and Larson introduced a family of combinatorial Hopf algebras of trees associated with data structures and used it to give a new proof of Cayley's enumeration of finite rooted trees. The construction of the Grossman-Larson Hopf algebra was also motivated by some concern about differential operators and differential equations, in particular the Runge-Kutta method in numerical analysis~\cite{GL1}.

The Grossman-Larson Hopf algebra of trees has close relationship with many other combinatorial Hopf algebras from different application areas. For example, authors in \cite{Ho}, \cite{Pan} observed that the Grossman-Larson Hopf algebra of (non-planar) rooted trees is graded dual to the Connes-Kreimer Hopf algebra arising from the study on renormalization in perturbative quantum field theory~\cite{CK}. On the other hand, the Grossman-Larson Hopf algebra of ordered trees is graded dual to the Munthe-Kaas-Wright Hopf algebra of ordered forests defined in the framework of Lie groups acting on manifolds and Lie group integrators~\cite{MW,ER}.

In ~\cite[\S 3.4]{BO} Berland and Owren reviewed the antipode of the Grossman-Larson Hopf algebra of ordered trees, and said that an explicit formula for it seems hard to derive in general.
When basic operations of combinatorial objects only define bialgebra structures, to obtain Hopf algebra structures, one may appeal to Takeuchi's celebrated
result~\cite{Ta}, which states that every graded, connected bialgebra is a Hopf algebra
with a universal antipode formula. However, this formula is given by an alternating sum usually involving a large
number of cancellations, so is not optimal for producing combinatorial identities
among the elements of the Hopf algebra. For a combinatorial Hopf algebra, it is more meaningful to write down a cancellation-free antipode formula with a reduced expansion in terms of a combinatorial basis. Here we intend to
provide a cancellation-free antipode formula for the Grossman-Larson Hopf algebra of ordered trees, by serving it as the sub-adjacent Hopf algebra of the universal enveloping algebra of the free post-Lie algebra on one generator. Concrete examples verify its computational efficiency.

So far a substantial body of work has focused on developing cancellation-free antipode formulas for various combinatorial Hopf algebras, including the Hopf algebra of graphs~\cite{HM,CV}, permutations~\cite{XY}, set operads~\cite{MeL},
 and also several variations of quasisymmetric functions~\cite{BS,Li,Pa,YGT}, etc.
For combinatorial Hopf algebras in mathematical physics, we mention the following works.
In \cite{FG,FG1} Figueroa and Gracia-Bondia calculated the antipode of Kreimer's Hopf algebra of Feynman diagrams by the Zimmermann forest formula. In \cite{MP} Menous and Patras generalized such a forest formula to compute the cancellation-free antipode formula for arbitrary right-handed polynomial Hopf algebras, which are graded dual to the universal enveloping algebras of pre-Lie algebras.
Recently, Foissy obtained an explicit antipode formula for the Connes-Moscovici Hopf algebra of trees from the point of view of Com-PreLie Hopf algebras~\cite{Fo}.

The paper is organized as follows. In Section~\ref{sec:pH}, we first recall the notion of post-Hopf algebra and basic properties of a cocommutative post-Hopf algebra, then turn to post-Lie algebras. We focus on the tensor Hopf algebra over a magma algebra, as the universal enveloping algebra of a free post-Lie algebra.
By twisting the post-Hopf product,
we provide a combinatorial antipode formula for its sub-adjacent Hopf algebra (Propositions~\ref{prop:antipode} and~\ref{prop:btr}).
Next we prove that such a formula indeed works for the universal enveloping algebra of an arbitrary post-Lie algebra.
(Theorem~\ref{thm:antipode_uea}).
In Section~\ref{sec:OG}, we first use the same twisted product to describe Gavrilov's $K$-map combinatorially (Proposition~\ref{prop:K-map'}), then deduce a closed inverse formula for the Guin-Oudom isomorphism in the context of post-Lie algebras (Theorem~\ref{thm:hopf-iso-inv}).
In Section~\ref{sec:GL}, we specifically figure out a cancellation-free antipode formula for the Grossman-Larson Hopf algebra of ordered trees (Theorem~\ref{thm:GL_antipode}) together with a concrete tree-grafting interpretation of the corresponding twisted post-Hopf product (Proposition~\ref{prop:GL_graft}).

\vspace{2mm}

\noindent
{\bf Convention.}
In this paper, we fix an algebraically closed ground field $\bk$ of characteristic 0.
Denote $[n]=\{1,\dots, n\}$ for any positive integer $n$.
All the objects under discussion, including vector spaces, algebras and tensor products, are taken over $\bk$ by default.

For any unital algebra $(A,m,u)$ with the multiplication $m$ and the unit map $u:\bk\to A$, let $m^{(0)}=\id$ and for $n\geq1$ we write
$$m^{(n)}=(m\otimes \id^{\otimes(n-1)})\cdots (m\otimes\id)m.$$

For any coalgebra $(C,\Delta,\vep)$, we compress the Sweedler notation of the comultiplication $\Delta$ as $$\Delta(x)=x_1\otimes x_2$$ for simplicity.
Furthermore, let $\Delta^{(0)}=\id$ and for $n\geq1$ we write
$$\Delta^{(n)}(x)=(\Delta\otimes\id^{\otimes (n-1)})\cdots(\Delta\otimes\id)\Delta(x)=x_1\otimes\cdots\otimes x_{n+1}.$$

By convention, a Hopf algebra is denoted by $H=(H,\cdot\,,1,\Delta,\vep,S)$.
For other basic notions of Hopf algebras, we follow the textbooks~\mcite{Mon}.

\section{The universal enveloping algebra of a post-Lie algebra}\label{sec:pH}

First we recall the notion of post-Hopf algebra introduced in~\cite{LST}.
The universal enveloping algebra of a post-Lie algebra is naturally a cocommutative post-Hopf algebra, which gives rise to a sub-adjacent Hopf algebra.
We will deal with the free post-Lie algebra over a magma algebra to derive a combinatorial formula for the antipode of the corresponding sub-adjacent Hopf algebra, and then induce it to the general situation for an arbitrary post-Lie algebra.

\subsection{Post-Hopf algebras and their basic properties}

\begin{defn}\label{defi:pH}
A {\bf post-Hopf algebra} is a pair $(H,\rhd)$, where $H$ is a Hopf algebra and $\rhd:H\otimes H\to H$ is a coalgebra homomorphism satisfying the following equalities:
\begin{eqnarray}
\label{Post-2}x\rhd (y\cdot z)&=&(x_1\rhd y)\cdot(x_2\rhd z),\\
\label{Post-4}x\rhd (y\rhd z)&=&\big(x_1\cdot(x_2\rhd y)\big)\rhd z,
\end{eqnarray}
for any $x,y,z\in H$, and the left multiplication $\alpha_\rhd:H\to \End(H)$ defined by
$$\alpha_{\rhd, x} y= x\rhd y,\quad\forall x,y\in H,$$
is convolution invertible in $\Hom(H,\End(H))$. Namely, there exists a unique linear map $\beta_\rhd:H\to\End(H)$ such that
\begin{equation}\label{Post-con}
\alpha_{\rhd,x_1}\beta_{\rhd,x_2}=\beta_{\rhd,x_1}\alpha_{\rhd,x_2}=\vep(x)\id_H,\quad\forall x\in H.
\end{equation}
\end{defn}

Moreover, we have the following properties.

\begin{lemma}[{\cite[Lemma~2.4]{LST}}]
Let $(H,\rhd)$ be a post-Hopf algebra. Then for all $x,y\in H$, we have
\begin{eqnarray}
\label{Post-1}x\rhd 1&=&\vep(x)1,\\
\label{Post-3}1\rhd x&=&x,\\
\label{Post-5}S(x\rhd y)&=&x\rhd S(y).
\end{eqnarray}
\end{lemma}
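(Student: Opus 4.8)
The plan is to prove the three identities in the stated order, feeding \eqref{Post-1} into the proofs of the other two. Throughout I view $\rhd$ through its adjoint $\alpha_\rhd\colon H\to\End(H)$ and work inside suitable convolution algebras, so that the convolution invertibility hypothesis and the antipode axiom can be activated by the uniqueness of inverses.

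For \eqref{Post-1}, I would use the convolution algebra $\Hom(H,\End(H))$, whose product is $(\phi*\psi)(x)=\phi(x_1)\circ\psi(x_2)$ and whose unit is $x\mapsto\vep(x)\id_H$. Setting $y=1$ in \eqref{Post-2} gives $x\rhd z=(x_1\rhd 1)\cdot(x_2\rhd z)$; writing $\ell(x)$ for left multiplication by $x\rhd 1$, this says exactly $\alpha_\rhd=\ell*\alpha_\rhd$. Since $\alpha_\rhd$ is convolution invertible with inverse $\beta_\rhd$ by \eqref{Post-con}, convolving on the right by $\beta_\rhd$ collapses the right-hand side and yields $\ell=\alpha_\rhd*\beta_\rhd$, the convolution unit. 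Hence left multiplication by $x\rhd 1$ equals $\vep(x)\id_H$, and evaluating at $1$ gives $x\rhd 1=\vep(x)1$.

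For \eqref{Post-3}, note first that \eqref{Post-1} gives $1\rhd 1=1$. Putting $x=1$ in \eqref{Post-4} and using $\Delta(1)=1\otimes 1$ reduces it to $1\rhd(y\rhd z)=(1\rhd y)\rhd z$. Abbreviating $L=\alpha_{\rhd,1}$, this reads $L(y\rhd z)=L(y)\rhd z$, and the choice $y=1$ together with $1\rhd 1=1$ gives $L^2=L$. But \eqref{Post-con} at $x=1$ shows $L$ is invertible with inverse $\beta_{\rhd,1}$, so the idempotent $L$ must equal $\id_H$; that is, $1\rhd x=x$. For \eqref{Post-5}, I pass to the convolution algebra $\Hom(H\otimes H,H)$ built from the tensor coalgebra on $H\otimes H$ and the algebra $H$, whose unit is $x\otimes y\mapsto\vep(x)\vep(y)1$. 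Introduce $\mu(x\otimes y)=x\rhd y$, $\nu(x\otimes y)=x\rhd S(y)$ and $\sigma(x\otimes y)=S(x\rhd y)$. Applying \eqref{Post-2} to the factorization $y_1\cdot S(y_2)=\vep(y)1$ and invoking \eqref{Post-1} shows $\mu*\nu$ is the unit, so $\nu$ is a right convolution inverse of $\mu$. On the other side, since $\rhd$ is a coalgebra homomorphism we have $\Delta(x\rhd y)=(x_1\rhd y_1)\otimes(x_2\rhd y_2)$ and $\vep(x\rhd y)=\vep(x)\vep(y)$; feeding $w=x\rhd y$ into the antipode axiom $S(w_1)\cdot w_2=\vep(w)1$ shows $\sigma*\mu$ is the unit, so $\sigma$ is a left convolution inverse of $\mu$. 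Uniqueness of two-sided inverses in the convolution monoid then forces $\sigma=\nu$, which is precisely \eqref{Post-5}.

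The main obstacle is organizational rather than computational: one must identify the correct ambient convolution algebra for each identity — endomorphisms under composition for \eqref{Post-1} and \eqref{Post-3}, the tensor coalgebra $H\otimes H$ for \eqref{Post-5} — so that the single hypothesis \eqref{Post-con} (for \eqref{Post-1} and \eqref{Post-3}) and the antipode axiom (for \eqref{Post-5}) can be leveraged through the left/right inverse uniqueness trick. Once the right algebras are in place, each step is a short manipulation, and the coalgebra-homomorphism property of $\rhd$ enters only to guarantee the compatibilities $\Delta(x\rhd y)=(x_1\rhd y_1)\otimes(x_2\rhd y_2)$ and $\vep(x\rhd y)=\vep(x)\vep(y)$ used in the last part.
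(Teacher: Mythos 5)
Your proof is correct: the convolution-unit argument in $\Hom(H,\End(H))$ for \eqref{Post-1}, the invertible-idempotent argument for \eqref{Post-3}, and the two-sided-inverse uniqueness in $\Hom(H\otimes H,H)$ for \eqref{Post-5} all check out. The paper itself does not reprove this lemma but imports it from \cite[Lemma~2.4]{LST}, and your argument is essentially the standard one given there, so there is nothing further to reconcile.
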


\begin{theorem}[{\cite[Theorem~2.5]{LST}}]\label{thm:subHopf}
Let $(H,\rhd)$ be a cocommutative post-Hopf algebra. Then
$$H_\rhd\coloneqq(H,\circ ,1,\Delta,\vep,S_\rhd)$$
is a Hopf algebra, which is called the {\bf sub-adjacent Hopf algebra}, where for all $x,y\in H$,
\begin{eqnarray}
\label{post-rbb-1}x \circ  y&\coloneqq&x_1\cdot (x_2\rhd y),\\
\label{post-rbb-2}S_\rhd(x)&\coloneqq&\beta_{\rhd,x_1}(S(x_2)).
\end{eqnarray}

Furthermore, $H$ is a left $H_\rhd$-module bialgebra via the action $\rhd$.

\end{theorem}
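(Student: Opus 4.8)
The plan is to verify in turn that $(H,*_\rhd,1)$ is a unital associative algebra, that $(H,*_\rhd,1,\Delta,\vep)$ is a bialgebra, that $S_\rhd$ is its antipode, and that $\rhd$ exhibits $H$ as a left $H_\rhd$-module bialgebra; cocommutativity of $H$ will be used repeatedly. Unitality is quick: $1*_\rhd y=1\cdot(1\rhd y)=y$ by \eqref{Post-3}, while $x*_\rhd 1=x_1\cdot(x_2\rhd 1)=x_1\vep(x_2)=x$ by \eqref{Post-1}. For associativity I would expand $x*_\rhd(y*_\rhd z)=x_1\cdot\big(x_2\rhd(y_1\cdot(y_2\rhd z))\big)$, distribute the inner action over the product via \eqref{Post-2} to reach $x_1\cdot(x_2\rhd y_1)\cdot\big(x_3\rhd(y_2\rhd z)\big)$, and then linearize the iterated action $x_3\rhd(y_2\rhd z)$ by \eqref{Post-4} into $\big(x_3\cdot(x_4\rhd y_2)\big)\rhd z$; the resulting $x_1\cdot(x_2\rhd y_1)\cdot\big((x_3\cdot(x_4\rhd y_2))\rhd z\big)$ matches the analogous expansion of $(x*_\rhd y)*_\rhd z$ once the comultiplication of $*_\rhd$ (established next) is invoked, so associativity reduces to that of $\cdot$.

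For the bialgebra axioms I would compute, using that $\cdot$ is an algebra map for $\Delta$ and that $\rhd$ is a coalgebra homomorphism, $\Delta(x*_\rhd y)=\Delta(x_1)\cdot\Delta(x_2\rhd y)=\big(x_1\cdot(x_3\rhd y_1)\big)\otimes\big(x_2\cdot(x_4\rhd y_2)\big)$. Cocommutativity of $H$ then lets me interchange the two middle legs $x_2$ and $x_3$, turning this into $(x_1*_\rhd y_1)\otimes(x_2*_\rhd y_2)$; together with $\vep(x*_\rhd y)=\vep(x_1)\vep(x_2\rhd y)=\vep(x)\vep(y)$ this shows $\Delta$ and $\vep$ are morphisms for $*_\rhd$. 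This is the one step where cocommutativity is indispensable.

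The antipode is the heart of the proof. For the right convolution identity I would use \eqref{post-rbb-1} and \eqref{post-rbb-2} to expand $x_1*_\rhd S_\rhd(x_2)=x_1\cdot\big(\alpha_{\rhd,x_2}\circ\beta_{\rhd,x_3}\big)(S(x_4))$ after coassociativity, regroup the middle pair $x_2\otimes x_3$ as the comultiplication of a single element, and apply \eqref{Post-con} in the form $\alpha_{\rhd,w_1}\circ\beta_{\rhd,w_2}=\vep(w)\id_H$; this collapses the sum to $x_1\cdot S(x_2)=\vep(x)1$ via the antipode of $H$. The left identity $S_\rhd(x_1)*_\rhd x_2=\vep(x)1$ is more delicate, since in $*_\rhd$ the factor $S_\rhd(x_1)$ gets comultiplied and I have no a priori control of $\Delta\circ\beta_\rhd$. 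I would therefore first prove, as a lemma, that $\beta_\rhd$ is itself a coalgebra homomorphism, $\Delta(\beta_{\rhd,x}(y))=\beta_{\rhd,x_1}(y_1)\otimes\beta_{\rhd,x_2}(y_2)$: post-composing with $\Delta$ makes $\alpha_\rhd$ a convolution-invertible, coalgebra-compatible element of $\Hom(H,\End(H\otimes H))$, the compatibility being exactly the coalgebra-homomorphism property of $\rhd$, and uniqueness of convolution inverses forces its inverse $\beta_\rhd$ to be coalgebra-compatible as well. Granting this, the mirror computation, using instead $\beta_{\rhd,w_1}\circ\alpha_{\rhd,w_2}=\vep(w)\id_H$ from \eqref{Post-con}, yields the left identity. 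I expect this control of the implicitly defined $\beta_\rhd$ to be the main obstacle.

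Finally, the module-bialgebra assertion reads off from the axioms. Reading \eqref{Post-4} as $(x*_\rhd y)\rhd z=x\rhd(y\rhd z)$ together with $1\rhd z=z$ from \eqref{Post-3} shows $\rhd$ is a left action of the algebra $(H,*_\rhd,1)$; \eqref{Post-2} and \eqref{Post-1} say this action makes $H$ a module algebra; and the fact that $\rhd$ is a coalgebra homomorphism supplies the module-coalgebra compatibilities $\Delta(x\rhd y)=(x_1\rhd y_1)\otimes(x_2\rhd y_2)$ and $\vep(x\rhd y)=\vep(x)\vep(y)$. Hence $H$ is a left $H_\rhd$-module bialgebra via $\rhd$.
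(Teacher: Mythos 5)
The paper itself offers no proof of this statement---it is imported wholesale from \cite[Theorem~2.5]{LST}---so I am measuring your proposal against the standard argument. Most of what you write is sound: the unit computation, the associativity argument via \eqref{Post-2} and \eqref{Post-4}, the bialgebra compatibility (where cocommutativity is indeed the essential input), the right convolution identity $x_1 *_\rhd S_\rhd(x_2)=\vep(x)1$, and the module-bialgebra claim all check out. Your lemma that $\beta_\rhd$ is compatible with $\Delta$ is also correct and genuinely needed: writing $\tilde\alpha_x=\alpha_{\rhd,x_1}\otimes\alpha_{\rhd,x_2}\in\End(H\otimes H)$, cocommutativity shows $\tilde\beta_x=\beta_{\rhd,x_1}\otimes\beta_{\rhd,x_2}$ is its convolution inverse, the coalgebra-homomorphism property of $\rhd$ is the intertwining $\Delta\circ\alpha_{\rhd,x}=\tilde\alpha_x\circ\Delta$, and the sandwich argument transfers this to $\beta_\rhd$. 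Together with $S$ being a coalgebra morphism (cocommutativity again), this makes $S_\rhd$ a coalgebra endomorphism.

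The gap is the left identity $S_\rhd(x_1)*_\rhd x_2=\vep(x)1$, which is \emph{not} a mirror of the right one. Expanding with your lemma gives $S_\rhd(x_1)*_\rhd x_2=\beta_{\rhd,x_1}(S(x_2))\cdot\bigl(\beta_{\rhd,x_3}(S(x_4))\rhd x_5\bigr)$: the second factor is $\alpha_{\rhd,\beta_{\rhd,x_3}(S(x_4))}(x_5)$, so $\beta_\rhd$ only ever appears inside the \emph{subscript} of an $\alpha_\rhd$, and the composite $\beta_{\rhd,w_1}\circ\alpha_{\rhd,w_2}$ you propose to invoke from \eqref{Post-con} never occurs. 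The asymmetry of $a*_\rhd b=a_1\cdot(a_2\rhd b)$ (the first factor acts on the second) is what breaks the symmetry between the two identities. The standard repair is: first note from \eqref{Post-4} that $\alpha_{\rhd,x*_\rhd y}=\alpha_{\rhd,x}\circ\alpha_{\rhd,y}$, so the already-proved right identity yields $\alpha_{\rhd,x_1}\circ\alpha_{\rhd,S_\rhd(x_2)}=\vep(x)\id$, and uniqueness of convolution inverses in $\Hom(H,\End(H))$ forces $\alpha_{\rhd,S_\rhd(x)}=\beta_{\rhd,x}$, i.e.\ $S_\rhd(x)=S_\rhd(x_1)\rhd S(x_2)$ (this is exactly the paper's remark following the theorem). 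Then, using that $S_\rhd$ is a coalgebra morphism together with cocommutativity and \eqref{Post-2},
\begin{equation*}
S_\rhd(x_1)*_\rhd x_2=\bigl(S_\rhd(x_1)\rhd S(x_2)\bigr)\cdot\bigl(S_\rhd(x_3)\rhd x_4\bigr)=S_\rhd(x_1)\rhd\bigl(S(x_2)\cdot x_3\bigr)=S_\rhd(x_1)\rhd\vep(x_2)1=\vep(x)1,
\end{equation*}
the last step by \eqref{Post-1}. With this replacement your outline becomes a complete proof.
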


\begin{remark}
(i) The product \eqref{post-rbb-1} generalizes the Grossman-Larson product \cite{GL,MW,OG} defined in the context of (noncommutative) polynomials of ordered trees and widely applied to the study of Magnus expansions \cite{EMQ} and Lie-Butcher series \cite{ML,MW}. A proper generalization of the Grossman-Larson product to the universal enveloping algebra of a post-Lie algebra was already presented in \cite{ELM}.

(ii) Since $\beta_\rhd$ is the convolution inverse of $\alpha_\rhd$ and
$$\alpha_{\rhd,x_1}\alpha_{\rhd,S_\rhd(x_2)}\stackrel{\eqref{Post-4},\,\eqref{post-rbb-1}}{=}
\alpha_{\rhd,x_1\circ  S_\rhd(x_2)}=\alpha_{\rhd,\vep(x)1}\stackrel{\eqref{Post-3}}{=}\vep(x)\id,$$
we have $\beta_{\rhd,x}=\alpha_{\rhd,S_\rhd(x)}$. Namely, Eq.~\eqref{post-rbb-2} is rewritten as
\begin{equation}\label{post-rbb-2'}
S_\rhd(x)=S_\rhd(x_1)\rhd S(x_2),\quad\forall x\in H.
\end{equation}
\end{remark}

\subsection{The universal enveloping algebra of a post-Lie algebra as a post-Hopf algebra}
Recall from \cite{Val} that a {\bf post-Lie algebra} $(\frakg,[\cdot,\cdot]_\frakg,\rhd)$ consists of a Lie algebra $(\frakg,[\cdot,\cdot]_\frakg)$ and a binary product $\rhd:\frakg\otimes\frakg\to\frakg$ such that
\begin{eqnarray}
\label{Post-L-1}x\rhd[y,z]_\frakg&=&[x\rhd y,z]_\frakg+[y,x\rhd z]_\frakg,\\
\label{Post-L-2}([x,y]_\frakg+x\rhd y-y\rhd x)\rhd z&=&x\rhd(y\rhd z)-y\rhd(x\rhd z).
\end{eqnarray}
When $(\frakg,[\cdot,\cdot]_\frakg)$ is abelian, $(\frakg,[\cdot,\cdot]_\frakg,\rhd)$ is a {\bf pre-Lie algebra}.
Any post-Lie algebra $(\frakg,[\cdot,\cdot]_\frakg,\rhd)$ has a {\bf sub-adjacent Lie algebra}
$\frakg_\rhd:=(\frakg,[\cdot,\cdot]_{\frakg_\rhd})$
defined by
\begin{equation}\label{eq:post-L-sub}
[x,y]_{\frakg_\rhd} \coloneqq x\rhd y-y\rhd x+[x,y]_\frakg,\quad\forall x,y\in\frakg.
\end{equation}

The universal enveloping algebra of a pre-Lie algebra and also that of a post-Lie algebra were successively studied in \cite{OG,ELM}. By \cite[Proposition~3.1, Theorem~ 3.4]{ELM}, the binary product $\rhd$ in a post-Lie algebra $(\frakg,[\cdot,\cdot]_\frakg,\rhd)$ can be extended to the universal enveloping algebra $\U(\frakg)$ and induces the sub-adjacent Hopf algebra $\U(\frakg)_\rhd$, which is naturally isomorphic to the universal enveloping algebra $\U(\frakg_\rhd)$ of the sub-adjacent Lie algebra $\frakg_\rhd$ by the Milnor-Moore structure theorem for connected cocommutative Hopf algebras.

We summarize the aforementioned result in the setting of post-Hopf algebras as follows.
\begin{theorem}[\cite{ELM,OG}]\label{th:post-uea}
Let $(\frakg,[\cdot,\cdot]_\frakg,\rhd)$ be a post-Lie algebra associated with the sub-adjacent Lie algebra $\frakg_\rhd$. Then  $(\U(\frakg), \rhd)$ is a post-Hopf algebra with the product $\rhd$ recursively defined by
\begin{align*}
&x\rhd 1=0,\quad x\rhd x_1\cdots x_r=\sum_{i=1}^r x_1\cdots (x\rhd x_i)\cdots x_r,\\
&1\rhd u = u,\quad x_1\cdots x_r\rhd u = x_1\rhd(x_2\cdots x_r\rhd u)-(x_1\rhd x_2\cdots x_r)\rhd u
\end{align*}
for all $x,x_1,\dots,x_r\in \frakg$ and $u\in \U(\frakg)$ with $r\geq1$.

Moreover, there is a Hopf algebra isomorphism $\phi:\U(\frakg_\rhd)\to \U(\frakg)_\rhd$ defined by $$\phi(x_1\cdots x_r)=x_1\circ \cdots\circ  x_r,\quad \forall x_1,\dots,x_r\in \frakg.$$
We call $\phi$ {\bf the Guin-Oudom isomorphism}.
\end{theorem}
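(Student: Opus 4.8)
The plan is to establish the two assertions in turn: first that the recursively defined $\rhd$ makes $\U(\frakg)$ a post-Hopf algebra, and then that $\phi$ is a Hopf algebra isomorphism. Throughout I regard $\U(\frakg)$ as a connected cocommutative Hopf algebra whose primitive space is $\frakg$, and I build $\rhd$ in two stages matching the two post-Lie axioms. For the first stage, fix $x\in\frakg$ and set $D_x:=x\rhd(-)$. Axiom \eqref{Post-L-1} says precisely that $D_x$ is a derivation of $[\cdot,\cdot]_\frakg$, so by the universal property of $\U(\frakg)$ it extends uniquely to an associative derivation of $\U(\frakg)$; this is the first recursion. A short check on the primitive generators, using $x\rhd 1=0$ and $1\rhd y=y$, shows each $D_x$ is also a coderivation, so that $\rhd$ restricted to $\frakg\otimes\U(\frakg)$ is a coalgebra map and satisfies \eqref{Post-2} when the first argument is primitive.

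The second stage extends the action to all of $\U(\frakg)$ in the first slot through the operators $\Lambda(u):=u\rhd(-)$, defined by $\Lambda(1)=\id$ and $\Lambda(x\cdot A)=D_x\circ\Lambda(A)-\Lambda(D_xA)$ for $x\in\frakg$. I expect the well-definedness of $\Lambda$ to be the main obstacle, since the presentation of $\U(\frakg)$ forces the identity $\Lambda(xyA)-\Lambda(yxA)=\Lambda([x,y]_\frakg A)$ for all $x,y\in\frakg$. The crucial preliminary is the commutator relation $[D_x,D_y]=D_{[x,y]_{\frakg_\rhd}}$: on the generators $\frakg$ this is exactly \eqref{Post-L-2} read through \eqref{eq:post-L-sub}, and both sides being derivations of $\U(\frakg)$ it holds everywhere. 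Granting this, one unwinds $\Lambda(x\cdot(yA))$ and $\Lambda(y\cdot(xA))$ with the recursion and the Leibniz rule for $D_x$, repeatedly rewriting $D_z\Lambda(B)$ as $\Lambda(zB)+\Lambda(D_zB)$; the cross terms cancel in pairs, and the survivors collapse, via the commutator relation together with $[x,y]_{\frakg_\rhd}=[x,y]_\frakg+x\rhd y-y\rhd x$, to exactly $\Lambda([x,y]_\frakg A)$. An induction on the degree of $A$ then makes $\Lambda$, hence $\rhd$, well defined.

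With $\rhd$ in hand, the post-Hopf axioms follow by degree induction. That $\rhd$ is a coalgebra homomorphism and that \eqref{Post-2} holds in general are propagated from the primitive case through the recursion using cocommutativity; axiom \eqref{Post-4}, which in operator form reads $\Lambda(x)\Lambda(y)=\Lambda\big(x_1\cdot(x_2\rhd y)\big)$, is the precise generalization of the defining recursion and is verified by induction on the degree of $x$. Finally, since $1\rhd(-)=\id$ and $\U(\frakg)$ is connected for its coradical filtration, the left multiplication $\alpha_\rhd$ agrees with the convolution unit on $\bk 1$ and is therefore convolution invertible by the standard geometric-series argument for connected coalgebras, producing $\beta_\rhd$. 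This completes the post-Hopf structure and, via Theorem~\ref{thm:subHopf}, yields the sub-adjacent Hopf algebra $\U(\frakg)_\rhd$.

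For the isomorphism I would first compute, for primitive $x,y$, that $x*_\rhd y=xy+x\rhd y$ from \eqref{post-rbb-1}, whence $x*_\rhd y-y*_\rhd x=[x,y]_\frakg+x\rhd y-y\rhd x=[x,y]_{\frakg_\rhd}$. Thus $\frakg\hookrightarrow(\U(\frakg),*_\rhd)$ is a morphism out of the sub-adjacent Lie algebra $\frakg_\rhd$, and the universal property of $\U(\frakg_\rhd)$ delivers an algebra homomorphism $\phi$ with $\phi(x_1\cdots x_r)=x_1*_\rhd\cdots*_\rhd x_r$. Since $(\U(\frakg),*_\rhd,\Delta)$ is a bialgebra by Theorem~\ref{thm:subHopf} and $\phi$ carries the primitive generators of $\U(\frakg_\rhd)$ to primitives of $\U(\frakg)_\rhd$, the map $\phi$ is a morphism of connected cocommutative Hopf algebras restricting to the identity on $\frakg$; hence it is an isomorphism by the Milnor-Moore theorem, equivalently because it induces the identity on the associated graded of the PBW filtration. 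This $\phi$ is the Oudom-Guin isomorphism.
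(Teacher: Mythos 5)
Your proposal is correct in outline, but be aware that the paper itself does not prove this statement: it is imported from \cite{ELM,OG} (the extension of $\rhd$ is \cite[Prop.~3.1, Thm.~3.4]{ELM}, and the isomorphism is justified there exactly as in your last paragraph, via the universal property and the Milnor--Moore theorem). What you have written is essentially a reconstruction of that standard argument: extend each $D_x=x\rhd(-)$ to a (co)derivation of $\U(\frakg)$ using \eqref{Post-L-1}; extend in the first slot by the recursion $\Lambda(xA)=D_x\Lambda(A)-\Lambda(D_xA)$; use $[D_x,D_y]=D_{[x,y]_{\frakg_\rhd}}$, which is \eqref{Post-L-2} read through \eqref{eq:post-L-sub}, for well-definedness; verify \eqref{Post-2} and \eqref{Post-4} by induction; get convolution invertibility of $\alpha_\rhd$ from $\alpha_{\rhd,1}=\id$ and connectedness; and identify $\phi$ from $x*_\rhd y-y*_\rhd x=[x,y]_{\frakg_\rhd}$ on primitives. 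The central computation $\Lambda(xyA)-\Lambda(yxA)=\Lambda([x,y]_{\frakg_\rhd}A)-\Lambda\bigl((x\rhd y-y\rhd x)A\bigr)=\Lambda([x,y]_\frakg A)$ does check out.

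One spot is thinner than you acknowledge. Well-definedness of $\Lambda$ on $\U(\frakg)$ requires annihilating the defining ideal in \emph{every} position, i.e.\ $\Lambda\bigl(B\,(xy-yx-[x,y]_\frakg)\,A\bigr)=0$ for an arbitrary prefix word $B$, not only the leading-position identity you verify. Since your recursion peels letters off the left of $B$, the term $\Lambda\bigl(D_z(B'(xy-yx-[x,y]_\frakg)A)\bigr)$ that appears forces you to know that $D_z$ maps the ideal into itself; this is precisely where \eqref{Post-L-1} must be invoked a second time, as it guarantees that $D_z(xy-yx-[x,y]_\frakg)$ is again a sum of such relators. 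You have this fact available, having extended $D_z$ as a derivation of the Lie bracket, but the induction should be run on the length of the prefix $B$ (or on total degree), not merely ``on the degree of $A$''. With that adjustment the argument is complete and agrees with the cited sources.
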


\begin{remark}The theory of classical $r$-matrices provides a rich source of examples of post-Lie algebras.
A linear operator $R$ on a Lie algebra $(\frakg,[\cdot,\cdot])$ is called a {\bf classical $r$-matrix} on $\frakg$, if
$$[x,y]_R\coloneqq \dfrac{1}{2}([Rx,y]+[x,Ry]),\quad\forall x,y\in \frakg$$
defines another Lie bracket on $\frakg$. Then such a pair $(\frakg,R)$ is called a {\bf double Lie algebra}~\cite{STS}.
If a classical $r$-matrix $R$ on $\frakg$ satisfies the {\bf modified classical Yang-Baxter equation}
$$[Rx,Ry]=R([Rx,y]+[x,Ry])-[x,y],\quad\forall x,y\in \frakg,$$
then $R_-\coloneqq \frac{1}{2}(R-\id)$ is a Rota-Baxter operator of weight 1, and naturally induces a post-Lie algebra $(\frakg,[\cdot,\cdot],\rhd)$ by
letting $x\rhd y\coloneqq [R_-(x),y]$ for any $x,y\in\frakg$,
such that the sub-adjacent Lie algebra $\frakg_\rhd$ is equal to $\frakg_R=(\frakg,[\cdot,\cdot]_R)$; see~\cite[Corollary~5.6]{BGN}.

Correspondingly, the authors in \cite{EMM} showed that the Guin-Oudom isomorphism $\phi:\U(\frakg_\rhd)\to\U(\frakg)_\rhd$ coincides with the linear isomorphism $F:\U(\frakg_R)\to\U(\frakg)$ in \cite{RS} which arose from the factorization of classical $r$-matrix $R$.
\end{remark}

The above extension from $(\frakg,\rhd)$ to $(\U(\frakg),\rhd)$ can be realized first on the tensor algebra $\Ten(\frakg)$, and then induced to its quotient $\U(\frakg)$.
Especially the universal enveloping algebra of the free post-Lie algebra over a magma algebra is  explicitly constructed by Foissy in \cite{Foissy} as follows.
\begin{prop}
\label{magma-to-post}
Let $(V,\rhd)$ be a magma algebra. Extend the magma operation $\rhd:V \otimes V\to V$
on the vector space $V$
to the coshuffle Hopf algebra $(\Ten(V),\cdot,\Delta_\shuffle,S)$ (using the same notation $\rhd$) as follows:
\begin{eqnarray*}\label{post-brace-algebra-1}
1\rhd a&=&a,\\
x\rhd a&=&x\rhd a,\\
(x\otimes x_1) \rhd a&=&x\rhd (x_1\rhd a)-(x\rhd x_1)\rhd a,\\
&\vdots&\\
(x\otimes x_1\otimes\cdots \otimes x_n) \rhd a&=&x\rhd \big((x_1\otimes\cdots \otimes x_n)\rhd a\big)\\
&&-\sum_{i=1}^{n}\big(x_1\otimes\cdots \otimes x_{i-1}\otimes(x\rhd x_i) \otimes x_{i+1}\otimes\cdots\otimes x_n\big)\rhd a,
\end{eqnarray*}
and
\begin{eqnarray*}\label{post-brace-algebra-2}
1\rhd 1&=&1,\\
x\rhd 1&=&0,\\
X \rhd (a_1\otimes\cdots\otimes a_m)&=&(X_1\rhd a_1)\otimes\cdots\otimes(X_m\rhd a_m),
\end{eqnarray*}
where $x,x_1,\dots,x_n,a,a_1,\dots,a_m\in V$ and $X\in \Ten(V),~\Delta_\shuffle^{(m-1)}(X)=X_1\otimes\cdots\otimes X_m$.
Then the quintuple
$(\Ten(V),\cdot\,,\Delta_\shuffle,S,\rhd)$
is the universal enveloping algebra of the free post-Lie algebra over the magma algebra $(V,\rhd)$, thus a post-Hopf algebra.
\end{prop}

\noindent
{\bf Convention.} From now on, we call an element in $V$ a {\bf letter}, and write any pure tensor in $\Ten(V)$
as a monomial with the tensor product $\otimes$ omitted for simplicity, so call it a {\bf word}.

\subsection{A combinatorial antipode formula for the sub-adjacent structure}
Consider the sub-adjacent Hopf algebra
$$\Ten(V)_\rhd=(\Ten(V),\circ ,\Delta_\shuffle,S_\rhd)$$
of the post-Hopf algebra $(\Ten(V),\cdot,\Delta_\shuffle,S,\rhd)$ mentioned in Proposition~\ref{magma-to-post}.
Recall that the coshuffle coproduct $\Delta_\shuffle$ is defined by $\Delta_\shuffle(x)=x\otimes 1+1\otimes x$ for any $x\in V$. Correspondingly, the iterated coshuffle coproduct on $\Ten(V)$ is given by
\begin{equation}\label{eq:cosh_coproduct}
\Delta_\shuffle^{(r-1)}(x_1\cdots x_m) =
\sum_{\pi:|\pi|=r} x_{B_1}\otimes\cdots\otimes x_{B_r},\quad \forall x_1,\dots, x_m\in V,\ r\geq1,
\end{equation}
where the sum ranges over all partitions $\pi$ of $[m]$ into a tuple $(B_1,\dots,B_{r})$ of $r$ possibly empty subsets, and we use the notation $x_I=x_{i_1}\cdots x_{i_r}\in \Ten(V)$ for any $I=\{i_1<\cdots< i_r\}\subseteq[m]$. In particular, $x_{\emptyset}=1$.
On the other hand, the antipode $S$ is given by
\begin{equation}\label{eq:antipo}
S(x_1 x_2 \cdots x_m)=(-1)^m x_m x_{m-1} \cdots x_1,\quad\forall x_1,x_2,\dots,x_m\in V.
\end{equation}

For the antipode $S_\rhd$ of the sub-adjacent Hopf algebra $\Ten(V)_\rhd$, we give our first result.
\begin{theorem}
For any word $X\in \Ten(V)$ with at most $m$ letters in $V$ $(m\geq1)$,
\begin{equation}\label{post-rbb-2''}
S_\rhd(X)=(\cdots(S(X_1)\rhd S(X_2))\rhd \cdots )\rhd S(X_m).
\end{equation}
\end{theorem}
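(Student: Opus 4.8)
The plan is to argue by induction on the number of slots $m$, taking the recursive antipode formula \eqref{post-rbb-2'} as the engine. Since $(\Ten(V),\cdot,\Delta_\shuffle,S,\rhd)$ is a cocommutative post-Hopf algebra by Theorem~\ref{magma-to-post}, Theorem~\ref{thm:subHopf} together with its remark applies and yields $S_\rhd(X)=S_\rhd(X_1)\rhd S(X_2)$, where $\Delta_\shuffle(X)=X_1\otimes X_2$ denotes the single coproduct. Throughout I abbreviate the right-hand side of \eqref{post-rbb-2''} as $T_m(X)$, the left-nested product associated with $\Delta_\shuffle^{(m-1)}(X)=X_1\otimes\cdots\otimes X_m$; the aim is then $T_m(X)=S_\rhd(X)$ for every word $X$ of length at most $m$.

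The first step is purely structural. Writing coassociativity as $\Delta_\shuffle^{(m-1)}=(\Delta_\shuffle^{(m-2)}\otimes\id)\Delta_\shuffle$ and peeling the outermost factor $\rhd\,S(X_m)$ off the left-nested product, I obtain the recursion
$$T_m(X)=T_{m-1}(X_1)\rhd S(X_2),\qquad m\ge 2,$$
with $\Delta_\shuffle(X)=X_1\otimes X_2$ again the single coproduct. This mirrors \eqref{post-rbb-2'} exactly, save that the inner factor carries $T_{m-1}$ rather than $S_\rhd$, which is precisely what the induction hypothesis is meant to convert. The base case $m=1$ reads $T_1(X)=S(X)$, and I verify $S_\rhd(X)=S(X)$ directly from \eqref{post-rbb-2'} and \eqref{Post-3} when $X$ equals $1$ or a single letter.

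The genuinely delicate point, and the main obstacle, is a mismatch of ranges in the inductive step when $X$ has exactly $m$ letters. Then the Sweedler expansion of $T_{m-1}(X_1)\rhd S(X_2)$ contains one term, arising from $X_1=X$ and $X_2=1$, whose inner argument still has $m$ letters, so the hypothesis (valid only up to length $m-1$) cannot be applied there. My plan is to show this boundary term vanishes: by \eqref{Post-1} it equals $\vep(T_{m-1}(X))\,1$, and since $\rhd$ is a coalgebra homomorphism one has $\vep(a\rhd b)=\vep(a)\vep(b)$, whence the counit axiom forces $\vep(T_{m-1}(X))=\vep(X)=0$ as soon as $X$ carries a letter. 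Gradedness of $\Delta_\shuffle$ ensures this is the only term escaping the hypothesis; replacing every remaining inner factor by $S_\rhd(X_1)$ and reinstating the equally vanishing term $S_\rhd(X)\rhd S(1)$, the truncated sum becomes the full Sweedler sum $S_\rhd(X_1)\rhd S(X_2)=S_\rhd(X)$. The complementary case, where $X$ has fewer than $m$ letters, is routine, since then every inner factor already has length at most $m-1$ and the hypothesis applies term by term.
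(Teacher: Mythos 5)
Your proof is correct and follows essentially the same route as the paper: both take the recursion $S_\rhd(X)=S_\rhd(X_1)\rhd S(X_2)$ of Eq.~\eqref{post-rbb-2'} as the engine and use Eq.~\eqref{Post-1} together with the fact that $\rhd$ preserves the counit to dispose of the degenerate Sweedler terms. The difference is purely organizational: you run a formal induction on $m$ and kill the single boundary term with $X_1=X$, $X_2=1$ at each stage, whereas the paper expands all $m-1$ iterations at once and then observes that every surviving term has its innermost factor in $\bk\oplus V$, where $S_\rhd$ and $S$ agree.
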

\begin{proof}
First note that $S_\rhd(x)=S(x)=-x$ for any $x\in V$. So it is done for $m=1$. When $m>1$, we can repeatedly apply Eq.~\eqref{post-rbb-2'} $m-1$ times
to obtain that
$$S_\rhd(X)=(\cdots(S_\rhd(X_1)\rhd S(X_2))\rhd \cdots )\rhd S(X_m).$$
Since $X$ has at most $m$ letters, any term in the iterated coproduct $\Delta_\shuffle^{(m-1)}(X)=X_1\otimes\cdots\otimes X_{m}$ has
all tensor factors from $V$ or at least one tensor factor from $\bk$.
Then by Eq.~\eqref{Post-1}, the leftmost tensor factor $X_1$ should be from $\bk$ or $V$ to make the term survive in the right-hand side of the above equality. Hence, we get the desired formula for the antipode $S_\rhd$ of $\Ten(V)_\rhd$.
\end{proof}

\begin{exam}\label{ex:anti}
Using Eq.~\eqref{post-rbb-2''}, one can easily calculate $S_\rhd(x_1 \cdots x_m)$ with $x_1,\dots,x_m\in V$ for small $m$. We compute the results for $m=2,3$ as a warm-up.
\begin{eqnarray*}
S_\rhd(x_1x_2) &=& S(x_1x_2)+S(x_1)\rhd S(x_2)+S(x_2)\rhd S(x_1)\\
&=& x_2x_1+x_1\rhd  x_2+ x_2\rhd x_1,\\[.5em]
S_\rhd(x_1x_2x_3) &=& S(x_1x_2x_3)+S(x_1)\rhd S(x_2x_3)
+S(x_2)\rhd S(x_1x_3)+S(x_3)\rhd S(x_1x_2)\\
&& + S_\rhd(x_1x_2)\rhd S(x_3) +S_\rhd(x_1x_3)\rhd S(x_2) + S_\rhd (x_2x_3)\rhd S(x_1)\\
&\stackrel{\eqref{Post-4}}{=}& -x_3x_2x_1
- (x_1 \rhd x_3)x_2 - x_3(x_1 \rhd x_2)\\
&& - (x_2 \rhd x_3)x_1 - x_3(x_2 \rhd x_1)
- (x_3 \rhd x_2)x_1 - x_2(x_3 \rhd x_1)\\
&& -(x_1\rhd x_2)\rhd  x_3 -x_2\rhd (x_1\rhd  x_3)
-(x_1\rhd x_3)\rhd  x_2 \\
&& -x_3\rhd (x_1\rhd  x_2)
-(x_2\rhd x_3)\rhd  x_1
-x_3\rhd (x_2\rhd  x_1).
\end{eqnarray*}
\end{exam}

Inspired by Eq.~\eqref{post-rbb-2'}, we introduce the following product twisting the post-Hopf product $\rhd$ by the antipode $S_\rhd$, which is a crucial tool for our study below.
\begin{defn}
Define a binary product $\btr$ on $\Ten(V)$ as follows,
\begin{equation}\label{eq:btr}
X \btr Y\coloneqq (-1)^{|X|}\beta_{\rhd,X}Y=(-1)^{|X|}S_\rhd(X)\rhd Y
\end{equation}
for any words $X,\,Y\in \Ten(V)$, with $|X|$ as the number of letters in $X$, i.e. its natural degree in $\Ten(V)$.
\end{defn}
As we will see in Lemma~\ref{lem:btr_recursive}, the sign $(-1)^{|X|}$ in Eq.~\eqref{eq:btr} is indispensable for the computation to avoid cancellations. Since the antipode $S_\rhd$ is a coalgebra endomorphism of the graded coalgebra $\Ten(V)$, Eq.\eqref{Post-2} implies that
$$X \btr YZ = (X_1 \btr Y)(X_2 \btr Z)$$
for any words $X,Y,Z\in\Ten(V)$. Correspondingly,
\begin{equation}\label{eq:btr_coalg}
X \btr Y = (X_1 \btr y_1)\cdots(X_n \btr y_n),
\end{equation}
when $Y=y_1\cdots y_n$ with $y_1,\dots,y_n\in V$.

\begin{prop}\label{prop:antipode}
The following formula holds for the antipode $S_\rhd$ of $\Ten(V)_\rhd$,
\begin{equation}\label{eq:antipode}
S_\rhd(x_1\cdots x_m)=(-1)^m\sum_\pi (x_{B_1}\btr x_{b_1})\cdots(x_{B_{|\pi|-1}}\btr x_{b_{|\pi|-1}}),\quad \forall x_1,\dots, x_m\in V,
\end{equation}
where the sum ranges over all partitions $\pi$ of $[m]$ into a tuple $(B_1,\dots,B_{|\pi|})$ of possibly empty subsets such that
$B_{|\pi|}=\{b_1>\cdots>b_{|\pi|-1}\}$.
\end{prop}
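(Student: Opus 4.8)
The plan is to reduce everything to a single application of the recursion \eqref{post-rbb-2'} together with the multiplicativity of $\btr$, and then to recognize the resulting sum as being indexed by the partitions described in the statement. Since $\Delta_\shuffle$ is an algebra map with $\Delta_\shuffle(x_i)=x_i\otimes 1+1\otimes x_i$, the coproduct of a word deconcatenates into subwords, $\Delta_\shuffle(x_1\cdots x_m)=\sum_{I\subseteq[m]}x_I\otimes x_{I^c}$, where $x_I$ is the subword at the positions of $I$ and $I^c=[m]\setminus I$. First I would apply \eqref{post-rbb-2'} once and rewrite each summand through the definition \eqref{eq:btr} of $\btr$, obtaining
$$S_\rhd(x_1\cdots x_m)=\sum_{I\subseteq[m]}S_\rhd(x_I)\rhd S(x_{I^c})=\sum_{I\subseteq[m]}(-1)^{|I|}\,x_I\btr S(x_{I^c}).$$

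Next I would unfold the inner antipode and distribute. Writing $I^c=\{j_1<\cdots<j_k\}$, the antipode formula \eqref{eq:antipo} gives $S(x_{I^c})=(-1)^k x_{j_k}\cdots x_{j_1}$, and then the coalgebra identity \eqref{eq:btr_coalg} expands $x_I\btr(x_{j_k}\cdots x_{j_1})$ as a sum of products $(x_{A_1}\btr x_{j_k})\cdots(x_{A_k}\btr x_{j_1})$ over all ordered partitions $(A_1,\dots,A_k)$ of $I$ into $k$ possibly empty blocks, coming from $\Delta_\shuffle^{(k-1)}(x_I)$. The two signs combine into the single global sign $(-1)^{|I|}(-1)^{|I^c|}=(-1)^m$, which is exactly the prefactor in \eqref{eq:antipode}.

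The last and genuinely combinatorial step, which I expect to be the main obstacle, is to package the double sum over $I$ and over ordered partitions of $I$ as a single sum over the partitions $\pi$ of the statement. I would set up the bijection sending $(I,(A_1,\dots,A_k))$ to $\pi=(A_1,\dots,A_k,I^c)$, with the last block $I^c$ read in decreasing order $\{b_1>\cdots>b_k\}$ so that $x_{j_{k+1-\ell}}=x_{b_\ell}$; this matches the reversed letters $x_{j_k},\dots,x_{j_1}$ to $x_{b_1},\dots,x_{b_k}$ and turns each summand into $(x_{B_1}\btr x_{b_1})\cdots(x_{B_{|\pi|-1}}\btr x_{b_{|\pi|-1}})$ with $|B_{|\pi|}|=|\pi|-1$. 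The care needed here is entirely in the bookkeeping: keeping the decreasing indexing of the last block aligned with the reversal produced by $S$, allowing empty non-final blocks, and checking the degenerate extremes — the term $I=[m]$ drops out because $x_{[m]}\btr 1=0$ by \eqref{Post-1}, matching the absence of any partition with empty last block, while $I=\emptyset$ reproduces the leading word $S(x_1\cdots x_m)$.
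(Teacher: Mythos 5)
Your proposal is correct and follows essentially the same route as the paper's proof: a single application of \eqref{post-rbb-2'} to get the two-block coshuffle sum $\sum_I S_\rhd(x_I)\rhd S(x_{I^c})$, conversion to $\btr$ via \eqref{eq:btr} with the signs combining to $(-1)^m$, reversal of the last block by \eqref{eq:antipo}, and expansion through \eqref{eq:btr_coalg} into the sum over partitions with distinguished last block. The only differences are cosmetic (the paper handles the inner antipode before introducing $\btr$, and leaves the final reindexing bijection implicit, while you spell it out together with the degenerate cases $I=[m]$ and $I=\emptyset$).
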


\begin{proof}
For any word $Y$ in $\Ten(V)$, let $Y^{\rm op}$ be its reverse. By formula~\eqref{eq:cosh_coproduct} of the iterated coshuffle coproduct, we have
\begin{eqnarray*}
S_\rhd(x_1\cdots x_m)&\stackrel{\eqref{post-rbb-2'}}{=}&
\sum_{\pi:|\pi|=2} S_\rhd(x_{B_1})\rhd S(x_{B_2})\\
&\stackrel{\eqref{Post-1},\,\eqref{eq:antipo}}{=}&
\sum_{\pi:|\pi|=2,\,B_2\neq\emptyset}(-1)^{|B_2|}
S_\rhd(x_{B_1})\rhd x_{B_2}^{\rm op}\\
&\stackrel{\eqref{eq:btr}}{=}&(-1)^m\sum_{\pi:|\pi|=2,\,B_2\neq\emptyset}
x_{B_1}\btr x_{B_2}^{\rm op}\\
&\stackrel{\eqref{eq:btr_coalg}}{=}&(-1)^m\sum_\pi (x_{B_1}\btr x_{b_1})\cdots(x_{B_{|\pi|-1}}\btr x_{b_{|\pi|-1}}),
\end{eqnarray*}
where the sum in the last equality ranges over all partitions $\pi$ of $[m]$ under the stated condition. Note that each term $(x_{B_1}\btr x_{b_1})\cdots(x_{B_{|\pi|-1}}\btr x_{b_{|\pi|-1}})$ is a homogeneous polynomial in $\Ten(V)$ of degree $|\pi|-1$.
\end{proof}

In order to apply Eq.~\eqref{eq:antipode} for concrete computation, we still need an explicit combinatorial formula for the product $\btr$ on $\Ten(V)$. First we give the following key lemma.
\begin{lemma}\label{lem:btr_recursive}
For any letters $x_1,\dots,x_m\in V$ with $m\geq2$ and word $Y\in\Ten(V)$, the following recursive formula holds,
\begin{equation}\label{eq:btr_recursive}
x_1\cdots x_m \btr Y = x_2 \cdots x_m \btr (x_1 \rhd Y) + \sum_{i=2}^m x_2\cdots(x_1\rhd x_i)\cdots x_m \btr Y.
\end{equation}
\end{lemma}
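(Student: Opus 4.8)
The plan is to peel off the first letter $x_1$ by specializing the associativity-type identity for $\btr$ recorded in the Remark preceding Proposition~\ref{prop:antipode}. Writing that identity with generic arguments as
\[
A \btr (B \btr C) = (-1)^{|B_2|}\, B_1\,(B_2 \rhd A) \btr C,
\]
I would apply it with $A = x_2\cdots x_m$, with $B = x_1$ a single letter, and with $C = Y$. The advantage of routing through this identity is that it already absorbs the signs $(-1)^{|\cdot|}$ built into the definition \eqref{eq:btr}, so no direct manipulation of $S_\rhd(x_1\cdots x_m)$ is needed.

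Since $x_1 \in V$ is primitive, $\Delta_\shuffle(x_1) = x_1 \otimes 1 + 1 \otimes x_1$, so the right-hand side collapses to exactly two terms. Using $1 \rhd (x_2\cdots x_m) = x_2\cdots x_m$ from \eqref{Post-3}, the summand with $(x_1)_2 = 1$ contributes $x_1 x_2\cdots x_m \btr Y$ (with sign $(-1)^0$), while the summand with $(x_1)_2 = x_1$ contributes $-(x_1 \rhd x_2\cdots x_m) \btr Y$ (with sign $(-1)^1$). This yields
\[
x_2\cdots x_m \btr (x_1 \btr Y) = x_1 x_2\cdots x_m \btr Y \;-\; (x_1 \rhd x_2\cdots x_m) \btr Y,
\]
which I would rearrange to isolate $x_1\cdots x_m \btr Y$ on the left.

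It then remains to match the two resulting terms with those in \eqref{eq:btr_recursive}. For the first term, since $S_\rhd(x_1) = S(x_1) = -x_1$ for a letter, the definition \eqref{eq:btr} gives $x_1 \btr Y = (-1)^{|x_1|}S_\rhd(x_1)\rhd Y = x_1 \rhd Y$, so $x_2\cdots x_m \btr (x_1 \btr Y) = x_2\cdots x_m \btr (x_1\rhd Y)$. For the second, the extension of $\rhd$ to $\Ten(V)$ acts on a word by the Leibniz-type rule of Theorem~\ref{th:post-uea}, namely $x_1 \rhd (x_2\cdots x_m) = \sum_{i=2}^m x_2\cdots(x_1\rhd x_i)\cdots x_m$, and because $\btr$ is linear in its left argument this distributes over the sum to give exactly $\sum_{i=2}^m x_2\cdots(x_1\rhd x_i)\cdots x_m \btr Y$. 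Combining the two identifications produces \eqref{eq:btr_recursive}.

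I do not expect a genuine obstacle here once the Remark identity is in hand; the computation is essentially forced, and the only point requiring care is the sign bookkeeping. Because $B = x_1$ is a single letter, the sole surviving sign is the $(-1)^{|(x_1)_2|}$ attached to the second coproduct term, and this is precisely what sets the correct relative sign between the two terms on the right-hand side. This is also where the factor $(-1)^{|X|}$ in \eqref{eq:btr} earns its keep: it is exactly this sign convention that makes the two contributions separate cleanly into a cancellation-free recursion rather than partially annihilating.
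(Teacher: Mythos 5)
Your proof is correct, and it does not follow the paper's route verbatim: the paper proves Lemma~\ref{lem:btr_recursive} by a direct five-line computation, expanding $(-1)^m S_\rhd(x_1\cdots x_m)\rhd Y$ via the decomposition $x_1\cdot(x_2\cdots x_m)=x_1*_\rhd(x_2\cdots x_m)-x_1\rhd(x_2\cdots x_m)$ from \eqref{post-rbb-1}, then invoking the anti-multiplicativity of $S_\rhd$ for $*_\rhd$ together with \eqref{Post-4}. You instead specialize the identity $X\btr(Y\btr Z)=(-1)^{|Y_2|}Y_1(Y_2\rhd X)\btr Z$ from the Remark preceding Proposition~\ref{prop:antipode} to a primitive middle argument $Y=x_1$ and rearrange. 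This is legitimate and non-circular, since that Remark is established before the Lemma using only \eqref{eq:btr}, \eqref{Post-4}, \eqref{post-rbb-1} and the antipode axiom; in effect the Remark already packages exactly the anti-homomorphism manipulation that the paper redoes inline in the Lemma's proof. Your sign bookkeeping is right ($(-1)^{|1|}=1$ for the term $x_1A\btr C$ and $(-1)^{|x_1|}=-1$ for the term $-(x_1\rhd A)\btr C$), as are the two identifications $x_1\btr Y=x_1\rhd Y$ and $x_1\rhd(x_2\cdots x_m)=\sum_{i=2}^m x_2\cdots(x_1\rhd x_i)\cdots x_m$. What your route buys is economy and transparency about where the sign in \eqref{eq:btr} matters; what the paper's route buys is self-containment and an explicit audit trail of which post-Hopf axioms enter at each step, which is arguably why the author chose it even though the Remark's identity is declared ``not used in the paper.''
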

\begin{proof}We obtain the stated recursive formula as follows,
\begin{eqnarray*}
x_1\cdots x_m \btr Y &\stackrel{\eqref{eq:btr}}{=}& (-1)^{m}S_\rhd(x_1\cdots x_m)\rhd Y\\
&\stackrel{\eqref{post-rbb-1}}{=}& (-1)^{m}S_\rhd(x_1\circ (x_2\cdots x_m)-x_1\rhd(x_2\cdots x_m))\rhd Y\\
&\stackrel{\eqref{Post-2}}{=}&
(-1)^{m}(S_\rhd(x_2\cdots x_m)\circ  S_\rhd(x_1))\rhd Y
+(-1)^{m+1}
S_\rhd\left(\sum_{i=2}^m x_2\cdots(x_1\rhd x_i)\cdots x_m\right)\rhd Y\\
&\stackrel{\eqref{Post-4},\,\eqref{post-rbb-1}}{=}&
(-1)^{m}S_\rhd(x_2\cdots x_m)
\rhd (S_\rhd(x_1)\rhd Y)
+(-1)^{m+1}\sum_{i=2}^m
S_\rhd(x_2\cdots(x_1\rhd x_i)\cdots x_m)\rhd Y\\
&\stackrel{\eqref{eq:antipo},\,\eqref{eq:btr}}{=}&
x_2\cdots x_m
\btr (x_1\rhd Y)
+\sum_{i=2}^m x_2\cdots(x_1\rhd x_i)\cdots x_m \btr Y.\hfill \qedhere
\end{eqnarray*}
\end{proof}

\begin{defn}\label{defn:nest}
For any letters $x_1,\dots,x_m\in V$, word $Y\in\Ten(V)$ and permutation $w\in S_m$ (with one-word notation), we define a specific element
$$[x_1,\dots,x_m;Y]_w$$
in $\Ten(V)$ by the following steps,
\begin{enumerate}[(i)]
\item\label{step1}
 Begin with the sequence $x_{w(1)}\ \cdots\ x_{w(m)}\ Y$.
 First use a pair of parentheses to enclose $x_1$ and its right adjacent letter $x_{w(w^{-1}(1)+1)}$ or right adjacent word $Y$, then use the product $\rhd$ to merge them.

\item\label{step2}
Next enclose the letter involving $x_2$ and its right adjacent letter, or right adjacent word involving $Y$ by a pair of parentheses, and then use the product $\rhd$ to merge them.

\item\label{step3}
Iteratively carry on such a procedure for letters involving $x_3,\dots, x_m$ successively, until there have been $m-1$ pairs of parentheses and $m$ symbols $\rhd$ inserted. The resulting element in $\Ten(V)$ is $[x_1,\dots,x_m;Y]_w$.
\end{enumerate}
\end{defn}

For example, we can obtain $[x_1,\dots,x_6;Y]_{256314}$ as follows,
\begin{eqnarray*}
x_2\ x_5\ x_6\ x_3\ x_1\ x_4\ Y &\stackrel{\rm\ref{step1}}{\rightsquigarrow}&
x_2\ x_5\ x_6\ x_3\ (x_1\rhd x_4)\ Y\\
&\stackrel{\rm\ref{step2}}{\rightsquigarrow}&
(x_2\rhd x_5)\ x_6\ x_3\ (x_1\rhd x_4)\ Y\\
&\stackrel{\rm\ref{step3}}{\rightsquigarrow}&
(x_2\rhd x_5)\ x_6\ (x_3\rhd (x_1\rhd x_4))\ Y\\
&\rightsquigarrow&
(x_2\rhd x_5)\ x_6\ ((x_3\rhd (x_1\rhd x_4))\rhd Y)\\
&\rightsquigarrow&
((x_2\rhd x_5)\rhd x_6)\ ((x_3\rhd (x_1\rhd x_4))\rhd Y)\\
&\rightsquigarrow&
((x_2\rhd x_5)\rhd x_6)\rhd((x_3\rhd (x_1\rhd x_4))\rhd Y)\\
&=& [x_1,\dots,x_6;Y]_{256314}.
\end{eqnarray*}

Now we are in the position to give the following combinatorial formula.
\begin{prop}\label{prop:btr}
For any letters $x_1,\dots,x_m\in V$ and word $Y\in \Ten(V)$,
\begin{equation}\label{eq:btr_comb}
x_1\cdots x_m \btr Y = \sum_{w\in S_m}[x_1,\dots,x_m;Y]_w.
\end{equation}
\end{prop}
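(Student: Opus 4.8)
The plan is to induct on $m$, with the recursive formula \eqref{eq:btr_recursive} of Lemma~\ref{lem:btr_recursive} driving the induction and the combinatorics of Definition~\ref{defn:nest} organized according to the \emph{right neighbor of $x_1$} in the opening word. For the base case $m=1$, Definition~\ref{defn:nest} gives $[x_1;Y]_{\mathrm{id}}=x_1\rhd Y$, while $x_1\btr Y=(-1)^{1}S_\rhd(x_1)\rhd Y=-(-x_1)\rhd Y=x_1\rhd Y$, since $S_\rhd(x_1)=S(x_1)=-x_1$; hence \eqref{eq:btr_comb} holds for $m=1$.

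For the inductive step I would read off the right-hand side of \eqref{eq:btr_recursive} as a sum over $S_m$, classified by where the letter $x_1$ sits in the word $x_{w(1)}\cdots x_{w(m)}\,Y$ that opens the construction of $[x_1,\dots,x_m;Y]_w$. In step~\ref{step1} the letter $x_1$ is merged with whatever stands immediately to its right, and there are exactly two possibilities. \emph{Case A:} $x_1$ occupies the last letter slot, i.e. $w(m)=1$, so it merges with the trailing word $Y$ to produce $x_1\rhd Y$; steps~\ref{step2}--\ref{step3} then process $x_2,\dots,x_m$ on the word $x_{w(1)}\cdots x_{w(m-1)}(x_1\rhd Y)$, which is precisely the construction of $[x_2,\dots,x_m;x_1\rhd Y]_{\sigma}$ for the induced ordering $\sigma$ of $x_2,\dots,x_m$. \emph{Case B:} $x_1$ is immediately followed by some letter $x_i$ with $i\in\{2,\dots,m\}$, and step~\ref{step1} replaces the adjacent pair $x_1\,x_i$ by the single letter $x_1\rhd x_i\in V$; the remaining steps then build $[x_2,\dots,x_1\rhd x_i,\dots,x_m;Y]_{\sigma}$.

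I would then package these two cases as bijections onto the terms of \eqref{eq:btr_recursive}. Summing Case A over all $w$ with $w(m)=1$ lets $\sigma$ range over all $(m-1)!$ orderings of $x_2,\dots,x_m$, so the induction hypothesis contributes $\sum_{\sigma\in S_{m-1}}[x_2,\dots,x_m;x_1\rhd Y]_\sigma=x_2\cdots x_m\btr(x_1\rhd Y)$, the first term of \eqref{eq:btr_recursive}. For fixed $i$, summing Case B over all $w$ in which $x_i$ directly follows $x_1$ amounts to arranging the block $x_1x_i$ together with the $m-2$ remaining letters, i.e. $(m-1)!$ words, matching $\sigma$ bijectively with the orderings of the $m-1$ letters $x_2,\dots,x_1\rhd x_i,\dots,x_m$; here it is essential that $x_1\rhd x_i$ lies in $V$ (the magma product of two letters is again a letter), so the induction hypothesis applies verbatim and yields $x_2\cdots(x_1\rhd x_i)\cdots x_m\btr Y$. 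Summing over $i=2,\dots,m$ produces the second group of terms in \eqref{eq:btr_recursive}, and adding the two contributions gives $x_1\cdots x_m\btr Y$ by Lemma~\ref{lem:btr_recursive}, completing the induction.

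The main obstacle is the bookkeeping inside the inductive step: one must verify that, \emph{after} step~\ref{step1} has consumed $x_1$, the subsequent steps acting on the reduced word reproduce exactly the nesting $[\,\cdot\,]_\sigma$ for the $m-1$ surviving letters. This requires checking that deleting $x_1$ preserves both the left-to-right adjacency relations among the remaining letters and the processing order $x_2,x_3,\dots$ prescribed by Definition~\ref{defn:nest}; in Case B the letter to watch is $x_i$, which is carried along as the composite $x_1\rhd x_i$ and is processed at precisely the step where $x_i$ itself would have been handled, so no reindexing conflict arises. Once this compatibility of the construction with the deletion of $x_1$ is established, the two bijections above are routine and the displayed sums in \eqref{eq:btr_recursive} are matched term by term.
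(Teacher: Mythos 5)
Your proposal is correct and follows essentially the same route as the paper: induction on $m$ driven by the recursion of Lemma~\ref{lem:btr_recursive}, with the sum over $S_m$ split according to whether $x_1$ is merged with $Y$ (the case $w(m)=1$) or with a right-adjacent letter $x_i$, and a bijection onto $S_{m-1}$ in each case. Your explicit remark that $x_1\rhd x_i$ is again a letter of $V$ is a point the paper leaves implicit, but the argument is the same.
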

\begin{proof}
Abbreviate $\sum_{w\in S_m}[x_1,\dots,x_m;Y]_w$ as $[x_1,\dots,x_m;Y]$.
When $m=1$, we clearly have
$$x_1 \btr Y = [x_1;Y]=x_1\rhd Y$$
by definition. For $m\geq2$,
\begin{eqnarray*}
[x_1,\dots,x_m;Y]
&=& \sum_{w\in S_m\atop w(m)=1}[x_1,\dots,x_m;Y]_w + \sum_{i=1}^{m-1}\sum_{w\in S_m\atop w(i)=1}[x_1,\dots,x_m;Y]_w
\\&=&
\sum_{\sigma\in S_{m-1}}[x_2,\dots, x_m;x_1\rhd Y]_\sigma\\
&+&
\sum_{j=2}^m \sum_{\sigma\in S_{m-1}}[x_2,\dots,x_{j-1},x_1\rhd x_j,x_{j+1},\dots, x_m;Y]_\sigma\\
&=&
[x_2,\dots,x_m; x_1\rhd Y]+\sum_{j=2}^m [x_2,\dots, x_{j-1},x_1\rhd x_j,x_{j+1},\dots,x_m;Y],
\end{eqnarray*}
where the second equality is due to the following fact. For any $i\in[m-1]$ and $w\in S_m$ with $w(i)=1$, there exists a unique $\sigma\in S_{m-1}$ such that
$$\sigma(k)=\begin{cases}
w(k)-1,&1\leq k<i,\\
w(k+1)-1,&i\leq k\leq m-1,
\end{cases}$$
and then $[x_1,\dots,x_m;Y]_w=[x_2,\dots,x_{j-1},x_1\rhd x_j,x_{j+1},\dots, x_m;Y]_\sigma$ for  $j=w(i+1)$ by Definition~\ref{defn:nest}. Hence, $[x_1,\dots,x_m;Y]$ satisfies the same recursive formula
as $x_1\cdots x_m\btr Y$ does in Eq.~\eqref{eq:btr_recursive}, and the stated equality~\eqref{eq:btr_comb} holds.
\end{proof}

\begin{remark}\label{rem:btr_env}
Given any post-Lie algebra $(\frakg,[\cdot,\cdot]_\frakg,\rhd)$, the product $\btr$ defined on $\Ten(\frakg)$ as in \eqref{eq:btr} can not be induced on $\U(\frakg)$ in general. Otherwise, we should have
$$J\btr \Ten(\frakg)\subseteq J,$$
where $J=(xy-yx-[x,y]_\frakg\,|\, x,y\in\frakg)$ is the Hopf ideal of $\Ten(\frakg)$ such that  $\U(\frakg)=\Ten(\frakg)/J$.

However, for any letters $x,y,z\in\frakg$, we check that
\begin{eqnarray*}
(xy-yx-[x,y]_\frakg)\btr z
&\stackrel{\eqref{eq:btr_recursive}}{=}&
y\rhd (x\rhd z) + (x\rhd y)\rhd z - x\rhd (y\rhd z)
- (y\rhd x)\rhd z -[x,y]_\frakg\rhd z \\
&=& y\rhd (x\rhd z) -x\rhd (y\rhd z) + (x\rhd y - y\rhd x -[x,y]_\frakg)\rhd z\\
&\stackrel{\eqref{Post-L-2}}{=}&   -2[x,y]_\frakg \rhd  z \in \frakg.
\end{eqnarray*}
By the Poincar\'e-Birkhoff-Witt theorem, we have $\frakg\cap J=\{0\}$, so $(xy-yx-[x,y]_\frakg)\btr z\in J$ if and only if $[x,y]_\frakg \rhd  z=0$, which does not necessarily hold.
\end{remark}

Next we generalize the antipode formula~\eqref{eq:antipode} to make it applicable for the universal enveloping algebra of an arbitrary post-Lie algebra, not just the free object over a magma algebra.
\begin{theorem}\label{thm:antipode_uea}
Given a post-Lie algebra $(\frakg,[\cdot,\cdot]_\frakg,\rhd)$, the antipode $S_\rhd$ of the sub-adjacent Hopf algebra $\U(\frakg)_\rhd$ is given by
\begin{equation}\label{eq:antipode_uea}
S_\rhd(x_1\cdots x_m)=(-1)^m\sum_\pi [x_{B_1}; x_{b_1}]\cdots[x_{B_{|\pi|-1}}; x_{b_{|\pi|-1}}],\quad \forall x_1,\dots, x_m\in \frakg,
\end{equation}
where the sum ranges over all partitions $\pi$ of $[m]$ into a tuple $(B_1,\dots,B_{|\pi|})$ of possibly empty subsets such that
$B_{|\pi|}=\{b_1>\cdots>b_{|\pi|-1}\}$. Also, we use the same abbreviation  $[x_I;y]\coloneqq\sum_{w\in S_r}[x_{i_1},\dots, x_{i_r};y]_w\in \frakg$ as in
the proof of Proposition~\ref{prop:btr} for any $I=\{i_1<\cdots< i_r\}\subseteq[m]$ and $y\in\frakg$. In particular, $[x_\emptyset;y]=y$.
\end{theorem}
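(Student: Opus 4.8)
The plan is to obtain the formula on $\U(\frakg)$ by transporting formula~\eqref{eq:antipode} from the free object along the quotient map. Viewing the underlying space of $\frakg$ as a magma algebra under $\rhd$, Theorem~\ref{magma-to-post} makes $(\Ten(\frakg),\cdot,\Delta_\shuffle,S,\rhd)$ a post-Hopf algebra, and Proposition~\ref{prop:antipode} supplies the antipode formula~\eqref{eq:antipode} for its sub-adjacent Hopf algebra $\Ten(\frakg)_\rhd$. Writing $\U(\frakg)=\Ten(\frakg)/J$ with $J$ the Hopf ideal generated by the elements $xy-yx-[x,y]_\frakg$, the content of Theorem~\ref{th:post-uea} is that $\rhd$ descends to $\U(\frakg)$, so the canonical projection $\pi:\Ten(\frakg)\to\U(\frakg)$ is a homomorphism of post-Hopf algebras.

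First I would note that $\pi$ then induces a surjective Hopf algebra homomorphism $\Ten(\frakg)_\rhd\to\U(\frakg)_\rhd$ of the sub-adjacent Hopf algebras: it preserves the common coproduct $\Delta_\shuffle$, and it is multiplicative for $*_\rhd$ since $\pi(x*_\rhd y)=\pi(x_1\cdot(x_2\rhd y))=\pi(x_1)\cdot(\pi(x_2)\rhd\pi(y))=\pi(x)*_\rhd\pi(y)$, where the last step uses that $\pi$ respects $\cdot$, $\Delta_\shuffle$ and $\rhd$ simultaneously. As every Hopf algebra morphism commutes with the antipode, this yields $\pi\circ S_\rhd=S_\rhd\circ\pi$, the left $S_\rhd$ being that of $\Ten(\frakg)_\rhd$ and the right one that of $\U(\frakg)_\rhd$.

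Next I would apply $\pi$ to both sides of~\eqref{eq:antipode} for $X=x_1\cdots x_m$ with each $x_i\in\frakg$. The right-hand side is a concatenation product of factors of the form $x_{B_j}\btr x_{b_j}$, so it suffices to control $\pi$ on each such factor, and this is exactly where Proposition~\ref{prop:ten-uea} enters: because $\btr$ descends to $\U(\frakg)$ we have $\pi(X\btr Y)=\pi(X)\btr\pi(Y)$, and being an algebra map for concatenation, $\pi$ distributes over the product of factors. Combined with $\pi\circ S_\rhd=S_\rhd\circ\pi$ and $\pi(x_I)=x_I$ in $\U(\frakg)$, this converts~\eqref{eq:antipode} term by term into the claimed identity~\eqref{eq:antipode_uea}, introducing no new cancellations and so preserving its cancellation-free character.

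The only genuinely non-routine input has already been isolated in Proposition~\ref{prop:ten-uea}, namely that $J\btr\Ten(\frakg)=0$ and $\Ten(\frakg)\btr J\subseteq J$, so that $\btr$ survives the quotient; granting this, the present theorem is essentially bookkeeping. The one point I would verify with care is that the projection intertwines the antipodes and the twisted products at the same time, after which the descent of the formula is automatic. I would close by remarking that, via the Oudom-Guin isomorphism of Theorem~\ref{th:post-uea}, formula~\eqref{eq:antipode_uea} equivalently computes the antipode of $\U(\frakg)_\rhd$ transported to $\U(\frakg_\rhd)$.
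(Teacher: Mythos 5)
Your proposal is correct and takes essentially the same route as the paper, whose proof consists of deducing the theorem directly from Propositions~\ref{prop:antipode} and~\ref{prop:ten-uea}. You simply make explicit the routine bookkeeping the paper leaves implicit, namely that the canonical projection $\Ten(\frakg)_\rhd\to\U(\frakg)_\rhd$ is a Hopf algebra morphism (hence intertwines the antipodes) and distributes over the factors $x_{B_j}\btr x_{b_j}$ because $\btr$ descends to the quotient.
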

\begin{proof}
First take $(\frakg,\rhd)$ as a magma algebra, and apply Proposition~\ref{prop:antipode} to the post-Hopf algebra $(\Ten(\frakg),\rhd)$, where the post-Hopf product of $\Ten(\frakg)$ is extended from the post-Lie product of $\frakg$.
Then we have the antipode formula~\eqref{eq:antipode} for its sub-adjacent Hopf algebra $\Ten(\frakg)_\rhd$.

Now let $J=(xy-yx-[x,y]_\frakg\,|\, x,y\in\frakg)$ be the Hopf ideal of $\Ten(\frakg)$ such that  $\U(\frakg)=\Ten(\frakg)/J$. Since $(\U(\frakg),\rhd)$ is a post-Hopf quotient algebra of $(\Ten(\frakg),\rhd)$, the antipode formula~\eqref{eq:antipode} can be induced on the sub-adjacent Hopf algebra $\U(\frakg)_\rhd$ as a quotient Hopf algebra of $\Ten(\frakg)_\rhd$. Though the product $\btr$ fails to be induced on $\U(\frakg)$ as pointed out in Remark~\ref{rem:btr_env}, we can interpret any factor $x_{B_i}\btr x_{b_i}$ in the RHS of~\eqref{eq:antipode} by~\eqref{eq:btr_comb} instead, and denote it as $[x_{B_i}; x_{b_i}]$. Then the desired formula is obtained.
\end{proof}

\begin{remark}
For any pre-Lie algebra $(\frakg,\rhd)$, $\U(\frakg)$ is the symmetric algebra $S(\frakg)$ over $\frakg$, and
the antipode formula \eqref{eq:antipode_uea} works for the sub-adjacent Hopf algebra $S(\frakg)_\rhd$. We take it as the graded-dual version of that for right-handed polynomial Hopf algebras given in \cite[Theorem~8]{MP}. However, it is unlikely to deduce any one of these two antipode formulas directly from the other.
\end{remark}

\section{Gavrilov's $K$-map and the Guin-Oudom isomorphism}\label{sec:OG}

In~\cite{Gav1} A. V. Gavrilov introduced the so-called $K$-map, which is crucial for building higher-order covariant derivatives on a smooth manifold endowed with an affine connection~(see also~\cite{Gav} and references therein).

From a post-Lie algebra point of view given in \cite{AEMM}, Gavrilov's $K$-map relates the two Hopf algebra structures on the universal enveloping algebra of the free post-Lie algebra over a magma algebra. We restate this result as follows.
\begin{prop}[{\cite[Theorem~3]{AEMM}}]
For the post-Hopf algebra $(\Ten(V),\cdot\,,\Delta_\shuffle,\rhd)$ given in Proposition~\ref{magma-to-post},
there exists a Hopf algebra isomorphism $K:\Ten(V)_\rhd\to \Ten(V)$ called {\bf the $K$-map} and recursively defined by
\begin{equation}\label{eq:K-map}
K(x)=x\quad \mbox{and}\quad K(x_1\cdots x_n)=x_1K(x_2\cdots x_n) - \sum_{i=2}^n K(x_2\cdots x_{i-1}(x_1\rhd x_i)x_{i+1}\cdots x_n)
\end{equation}
for any $x,x_1,\dots,x_n\in V$.
\end{prop}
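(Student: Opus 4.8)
The plan is to verify directly that the recursively defined linear map $K$ is multiplicative from $(\Ten(V),*_\rhd)$ to $(\Ten(V),\cdot)$, comultiplicative for the shared coproduct $\Delta_\shuffle$, and bijective. The first move is to extract from the recursion~\eqref{eq:K-map} the more flexible identity
\begin{equation*}
K(xZ)=x\,K(Z)-K(x\rhd Z),\qquad x\in V,\ Z\in\Ten(V),
\tag{$\star$}
\end{equation*}
which holds because a single letter acts on a word as a derivation, $x\rhd(z_1\cdots z_k)=\sum_{i}z_1\cdots(x\rhd z_i)\cdots z_k$ (Theorem~\ref{th:post-uea}), so the subtracted sum in~\eqref{eq:K-map} is exactly $K(x\rhd Z)$. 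Since \eqref{post-rbb-1} together with $\Delta_\shuffle(x)=x\otimes1+1\otimes x$ gives $x*_\rhd Z=xZ+x\rhd Z$ for $x\in V$, the correction terms telescope and yield the single-letter multiplicativity $K(x*_\rhd Z)=x\,K(Z)=K(x)K(Z)$.

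With this base case I would prove $K(X*_\rhd Y)=K(X)K(Y)$ for all words $X,Y$ by induction on the length $|X|$. Writing $X=x_1X'$ and using $x_1X'=x_1*_\rhd X'-x_1\rhd X'$ together with the associativity of $*_\rhd$ (Theorem~\ref{thm:subHopf}), one rewrites $X*_\rhd Y=x_1*_\rhd(X'*_\rhd Y)-(x_1\rhd X')*_\rhd Y$. Applying $K$, the single-letter case disposes of the first summand while the inductive hypothesis applies to the second, since both $X'$ and the words occurring in $x_1\rhd X'$ have length $|X|-1$; the identity $(\star)$ then recombines $x_1K(X')-K(x_1\rhd X')$ back into $K(X)$. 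Comultiplicativity is handled by the same induction through $(\star)$: using that $\Delta_\shuffle$ is an algebra morphism for concatenation and that $\rhd$ is a coalgebra map, one gets $\Delta_\shuffle(x\rhd Z)=(x\rhd Z_1)\otimes Z_2+Z_1\otimes(x\rhd Z_2)$ from $\Delta_\shuffle(x)=x\otimes1+1\otimes x$ and \eqref{Post-3}, and the two halves of $\Delta_\shuffle K(xZ)$ reassemble via $(\star)$ into $(K\otimes K)\Delta_\shuffle(xZ)$; the counit is immediate since $\vep(x)=\vep(x\rhd Z)=0$.

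For bijectivity I would read off from~\eqref{eq:K-map} that $K(x_1\cdots x_n)=x_1\cdots x_n+(\text{terms of strictly smaller length})$, so that $K=\id+N$ where $N$ strictly lowers the word length. On the length filtration $F_n=\bigoplus_{k\le n}V^{\otimes k}$ the operator $N$ is nilpotent, hence $K^{-1}=\sum_{k\ge0}(-N)^k$ is well defined termwise and $K$ is a linear isomorphism; alternatively, surjectivity is automatic because the image is a $\cdot$-subalgebra containing $V$. I expect the one genuinely substantive point to be the multiplicativity $K(X*_\rhd Y)=K(X)K(Y)$, and within it the observation that the seemingly ad hoc subtracted sum in~\eqref{eq:K-map} is precisely $K(x\rhd Z)$: this is what makes the single-letter correction terms cancel, and once the telescoping in $(\star)$ is isolated, associativity of $*_\rhd$ reduces the general case to a length induction. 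The comultiplicativity and the triangular bijectivity argument are then routine.
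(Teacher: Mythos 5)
Your argument is correct, and in fact the paper offers no proof of this proposition at all: it is imported verbatim from \cite[Theorem~3]{AEMM} and \cite[Prop.~1]{Gav}, so there is nothing in the text to compare against. Your proof is a sound self-contained verification. The two key observations are exactly right: the subtracted sum in \eqref{eq:K-map} equals $K(x\rhd Z)$ because a single letter acts as a coderivation-type derivation on words, so the recursion is $K(xZ)=xK(Z)-K(x\rhd Z)$; and since $x*_\rhd Z=xZ+x\rhd Z$ for primitive $x$, this gives $K(x*_\rhd Z)=xK(Z)$, after which associativity of $*_\rhd$ and induction on $|X|$ yield full multiplicativity. The comultiplicativity and the unitriangularity (hence bijectivity) arguments are routine as you say; a bialgebra isomorphism between Hopf algebras automatically intertwines the antipodes, which closes the statement. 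Two small points worth making explicit in a written version: you need $K(1)=1$ to start the recursion and the induction, and in the first summand of the multiplicativity step the inductive hypothesis is applied to $K(X'*_\rhd Y)$ as well as to the second summand (your parenthetical remark covers this, but it is the load-bearing step). For comparison, the route taken in \cite{AEMM} is essentially the reverse one: by freeness of $(\Ten(V),\cdot)$ on $V$, the identity on $V$ extends uniquely to an algebra map $\Ten(V)\to\Ten(V)_\rhd$, namely $x_1\cdots x_n\mapsto x_1*_\rhd\cdots*_\rhd x_n$ as in \eqref{eq:K-map-inv}, which is automatically a Hopf morphism and is unitriangular; $K$ is then defined as its inverse and the recursion \eqref{eq:K-map} is checked afterwards. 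That approach gets the Hopf-morphism property for free from universal properties, whereas yours verifies it directly from the recursion; both are valid, and yours has the advantage of working entirely from the formula as stated.
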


The closed formula for the inverse $K^{-1}:\Ten(V)\to \Ten(V)_\rhd$ is given as follows (see~\cite[Prop.~6]{AEMM}).
\begin{equation}\label{eq:K-map-inv}
K^{-1}(x_1\cdots x_n)=x_1 \circ  \cdots \circ  x_n=\sum_\pi x^\rhd_{B_1}\cdots x^\rhd_{B_{|\pi|}},\quad\forall
x_1,\dots,x_n\in V,
\end{equation}
where the sum ranges over all set partitions  $\pi=(B_1,\dots,B_{|\pi|})$ of $[n]$ with blocks $B_1,\dots,B_{|\pi|}$ satisfying $\max B_1<\cdots<\max B_{|\pi|}$, and $x^\rhd_I\coloneqq x_{i_1}\rhd(x_{i_2}\rhd\cdots(x_{i_{r-1}}\rhd x_{i_r})\cdots)\in V$ for any $I=\{i_1<\cdots< i_r\}\subseteq[n]$.

However, as pointed out in \cite{AEMM}, the recursive
expression~\eqref{eq:K-map} does not deliver easily a closed formula for the $K$-map $K$ itself.
Here we utilize the twisted product $\btr$ in Eq.~\eqref{eq:btr} to present a closed formula for $K$ combinatorially.
\begin{prop}\label{prop:K-map'}
The $K$-map $K:\Ten(V)_\rhd\to \Ten(V)$ is given by
\begin{equation}\label{eq:K-map'}
K(x_1\cdots x_n)=\sum_\pi(-1)^{n-|\pi|} x^\btr_{B_1}\cdots x^\btr_{B_{|\pi|}},\quad
\forall x_1,\dots,x_n\in V,
\end{equation}
where the sum ranges over all set partitions  $\pi=(B_1,\dots,B_{|\pi|})$ of $[n]$ with blocks $B_1,\dots,B_{|\pi|}$ satisfying $\max B_1<\cdots<\max B_{|\pi|}$, and
$$x^\btr_I\coloneqq x_{i_1}\cdots x_{i_{r-1}}\btr x_{i_r}\stackrel{\eqref{eq:btr_comb}}{=}\sum_{w\in S_{r-1}}[x_{i_1},\dots, x_{i_{r-1}};x_{i_r}]_w\in V$$
for any $I=\{i_1<\cdots< i_r\}\subseteq[n]$, with the convention that $x^\btr_{\{i\}}=x_i$, $\forall i\in [n]$.
\end{prop}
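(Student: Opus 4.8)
The plan is to prove that the right-hand side of~\eqref{eq:K-map'}, which I temporarily denote $\widetilde{K}(x_1\cdots x_n)$, satisfies the defining recursion~\eqref{eq:K-map} for the $K$-map. Since that recursion together with the initial datum $K(x)=x$ determines $K$ uniquely by induction on the word length, checking the base case $\widetilde{K}(x)=x$ (immediate, as the only partition of $[1]$ is the singleton and $x^\btr_{\{1\}}=x$) and reproducing the recursive step will force $\widetilde{K}=K$. The whole argument is a reorganization of the sum over max-ordered set partitions of $[n]$ according to the position of the least index $1$, so I would split the partitions $\pi=(B_1,\dots,B_{|\pi|})$ appearing in $\widetilde{K}(x_1\cdots x_n)$ into those for which the block containing $1$ is a singleton and those for which it is not.

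In the first class, because $\max B_1<\cdots<\max B_{|\pi|}$ and $1$ is the least index, a singleton block $\{1\}$ must be $B_1$; thus $x^\btr_{B_1}=x_1$ and the remaining blocks form a max-ordered partition of $\{2,\dots,n\}$. Deleting $B_1$ drops exactly one block, so the sign $(-1)^{n-|\pi|}$ becomes $(-1)^{(n-1)-(|\pi|-1)}$, which is precisely the sign attached to the corresponding partition of $\{2,\dots,n\}$ in $\widetilde{K}(x_2\cdots x_n)$. Hence this class contributes exactly $x_1\,\widetilde{K}(x_2\cdots x_n)$, matching the first term of~\eqref{eq:K-map}.

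The heart of the argument is the second class, where $1$ lies in a block $B_j=\{1=i_1<i_2<\cdots<i_r\}$ with $r\geq2$, so that $1$ is one of the operator letters of $x^\btr_{B_j}=x_1x_{i_2}\cdots x_{i_{r-1}}\btr x_{i_r}$. I would peel it off using Lemma~\ref{lem:btr_recursive} (the case $r=2$ being the trivial $x_1\btr x_{i_2}=x_1\rhd x_{i_2}$): this rewrites $x^\btr_{B_j}$ as a sum, over the non-minimal indices $i_p\in B_j$, of the element obtained from $x^\btr_{B_j\setminus\{1\}}$ by replacing the letter $x_{i_p}$ with $x_1\rhd x_{i_p}$ — the term hitting the maximum $i_r$ arising from the $x_1\rhd Y$ summand of the lemma and the remaining terms from its internal summands. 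Since $1$ is never the maximum of $B_j$, deleting it leaves $\max B_j$ unchanged, so $\pi\setminus\{1\}$ is again a max-ordered partition of $\{2,\dots,n\}$ with the \emph{same} number of blocks. I would then set up a bijection between pairs (Case-two partition $\pi$, choice of the acted-on index $i_p$) and pairs (max-ordered partition $\rho$ of $\{2,\dots,n\}$, distinguished index $i\in\{2,\dots,n\}$), given by deleting, respectively re-inserting, $1$ into the block of $i$; under it the peeled product for a fixed $i$ is exactly a term of $\widetilde{K}\big(x_2\cdots x_{i-1}(x_1\rhd x_i)x_{i+1}\cdots x_n\big)$. Because the block count is preserved, $(-1)^{n-|\pi|}=-(-1)^{(n-1)-|\rho|}$, so summing over $i=2,\dots,n$ reproduces $-\sum_{i=2}^n\widetilde{K}(x_2\cdots(x_1\rhd x_i)\cdots x_n)$, the second term of~\eqref{eq:K-map}.

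I expect the main obstacle to be the bookkeeping in the peeling step: one must verify that Lemma~\ref{lem:btr_recursive} accounts for each non-minimal index of $B_j$ exactly once (with the maximal index treated separately, since there $x_1$ modifies the target rather than an operator letter), that the max-ordering of blocks genuinely survives deletion of $1$, and that the proposed bijection is sign-compatible term by term. Once these combinatorial matchings are pinned down, the two classes assemble into the recursion~\eqref{eq:K-map}, and the uniqueness of the $K$-map completes the proof.
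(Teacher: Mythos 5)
Your proposal is correct and follows essentially the same route as the paper's proof: induction on $n$ via the recursion~\eqref{eq:K-map}, splitting the max-ordered partitions according to whether the block containing $1$ is the singleton $\{1\}$ (necessarily $B_1$), and peeling $x_1$ off the non-singleton block with Lemma~\ref{lem:btr_recursive}. The bijection and sign bookkeeping you spell out are exactly the content of the paper's displayed identity $(*)$ and its subsequent regrouping of the sum.
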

\begin{proof}
Let $R(x_1\cdots x_n)$ be the RHS of Eq.~\eqref{eq:K-map'}, and we prove this equality by induction on $n$. When $n=1$ we clearly have $K(x_1)=R(x_1)=x_1$. For $n\geq2$, let $\pi=(B_1,\dots,B_{|\pi|})$ be any set partition of $[n]$ satisfying the stated condition. Then $B_1=\{1\}$, or otherwise $1\in B_k$ for some block $B_k$ such that $|B_k|\geq2$, and then Eq.~\eqref{eq:btr_recursive} implies that
$$ x^\btr_{B_k}=x_{i_1}\cdots x_{i_{r-1}} \btr x_{i_r} = x_{i_2} \cdots x_{i_{r-1}} \btr (x_1 \rhd x_{i_r}) + \sum_{j=2}^{r-1} x_{i_2}\cdots(x_1\rhd x_{i_j})\cdots x_{i_{r-1}} \btr x_{i_r},\eqno{(*)}$$
if $B_k=\{1=i_1<i_2<\cdots< i_r\}$. Hence, we have
\begin{eqnarray*}
R(x_1\cdots x_n) &=& \sum_\pi(-1)^{n-|\pi|} x^\btr_{B_1}\cdots x^\btr_{B_{|\pi|}}\\
&=& x_1\sum_{\pi\,:\,B_1=\{1\}}(-1)^{n-|\pi|} x^\btr_{B_2}\cdots x^\btr_{B_{|\pi|}}
- \sum_{\pi\,:\,B_1\neq\{1\}}(-1)^{n-1-|\pi|} x^\btr_{B_1}\cdots x^\btr_{B_{|\pi|}}\\
&\stackrel{(*)}{=}& x_1R(x_2\cdots x_n)  - \sum_{i=2}^n R(x_2\cdots x_{i-1}(x_1\rhd x_i)x_{i+1}\cdots x_n)\\
&=& x_1K(x_2\cdots x_n)- \sum_{i=2}^n K(x_2\cdots x_{i-1}(x_1\rhd x_i)x_{i+1}\cdots x_n)\\
&\stackrel{\eqref{eq:K-map}}{=}& K(x_1\cdots x_n),
\end{eqnarray*}
where the fourth equality is due to the induction hypothesis.
\end{proof}

Since the $K$-map $K$ is a Hopf algebra isomorphism from $\Ten(V)_\rhd$ to $\Ten(V)$, we can also compute the antipode $S_\rhd$ on $T(V)_\rhd$ via the equality
$$S_\rhd(x_1\cdots x_n)=K^{-1}(S(K(x_1\cdots x_n))),\quad
\forall x_1,\dots,x_n\in V.$$
Note that the calculation involves no cancellation when one applies Eqs.~\eqref{eq:K-map'}, \eqref{eq:antipo} and~\eqref{eq:K-map-inv} successively, and all monomials in $K^{-1}(S(K(x_1\cdots x_n)))$ have integer coefficients of the same sign $(-1)^n$.
However, such an approach is somewhat oblique.

\begin{exam}
We compute the case when $n=3$. There are 5 set partitions of $[3]$, namely $123$, $1|23$, $12|3$, $2|13$, $1|2|3$, and
\begin{eqnarray*}
&&S_\rhd(x_1x_2x_3)=K^{-1}(S(K(x_1x_2x_3)))\\
&\stackrel{\eqref{eq:K-map'}}{=}&
K^{-1}(S(x^\btr_1x^\btr_2x^\btr_3-x^\btr_1x^\btr_{23}
-x^\btr_{12}x^\btr_{3}- x^\btr_{2}x^\btr_{13}+ x^\btr_{123}))\\
&\stackrel{\eqref{eq:btr_comb}}{=}&
K^{-1}(S(x_1x_2x_3-x_1(x_2\rhd x_3)
-(x_1\rhd x_2)x_3- x_2(x_1\rhd x_3)+ (x_1\rhd x_2)\rhd x_3+ x_2\rhd(x_1\rhd x_3)))\\
&\stackrel{\eqref{eq:antipo}}{=}&
-K^{-1}(x_3x_2x_1+(x_2\rhd x_3)x_1
+x_3(x_1\rhd x_2)+ (x_1\rhd x_3)x_2+ (x_1\rhd x_2)\rhd x_3 + x_2\rhd(x_1\rhd x_3))\\
&\stackrel{\eqref{eq:K-map-inv}}{=}&
-x_3x_2x_1-(x_3\rhd x_2)x_1  -x_3(x_2\rhd x_1) -x_2(x_3\rhd x_1) -(x_2\rhd x_3)x_1
-x_3(x_1\rhd x_2) - (x_1\rhd x_3)x_2
\\
&&-x_3\rhd(x_2\rhd x_1)-(x_2\rhd x_3)\rhd x_1
-x_3\rhd (x_1\rhd x_2) - (x_1\rhd x_3)\rhd x_2 - (x_1\rhd x_2)\rhd x_3 - x_2\rhd(x_1\rhd x_3).
\end{eqnarray*}
So we get the same result as in Example~\ref{ex:anti}.
\end{exam}

\subsection{A closed inverse formula for the Guin-Oudom isomorphism}
Given a post-Lie algebra $(\frakg,[\cdot,\cdot]_\frakg,\rhd)$, the Guin-Oudom isomorphism $\phi:\U(\frakg_\rhd)\to\U(\frakg)_\rhd$ in Theorem~\ref{th:post-uea} is the unique extension of the identity map $\id:\frakg_\rhd\to\frakg$, and has exactly the same combinatorial expression as $K^{-1}:\Ten(\frakg)\to\Ten(\frakg)_\rhd$ in~\eqref{eq:K-map-inv} (see~\cite[Eq.~(18)]{EMM}). In fact, we have
\begin{prop}\label{prop:K-map-OG}
For a post-Lie algebra $(\frakg,[\cdot,\cdot]_\frakg,\rhd)$,
the inverse $K^{-1}:\Ten(\frakg)\to\Ten(\frakg)_\rhd$ induces the Guin-Oudom isomorphism $\phi:\U(\frakg_\rhd)\to \U(\frakg)_\rhd$,
\end{prop}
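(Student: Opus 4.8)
The plan is to present the statement as the commutativity of the square formed by the inverse $K$-map $K^{-1}\colon\Ten(\frakg)\to\Ten(\frakg)_\rhd$, the canonical projections $p_\rhd\colon\Ten(\frakg)\to\U(\frakg_\rhd)$ (quotient by the ideal $J_\rhd$ of $(\Ten(\frakg),\cdot)$ generated by $xy-yx-[x,y]_{\frakg_\rhd}$) and $q\colon\Ten(\frakg)_\rhd\to\U(\frakg)_\rhd$ (quotient by the ideal $J$ generated by $xy-yx-[x,y]_\frakg$), together with the Oudom-Guin isomorphism $\phi$. Concretely, I would show that $q\circ K^{-1}=\phi\circ p_\rhd$ as linear maps on $\Ten(\frakg)$; this equality is exactly the assertion that $K^{-1}$ descends to $\phi$ on the respective quotients. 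First I would record that $q$ is a morphism of Hopf algebras (in particular a $*_\rhd$-algebra map): since the post-Hopf product $\rhd$ descends from $\Ten(\frakg)$ to $\U(\frakg)$ by Theorem~\ref{th:post-uea} and the concatenation product descends by construction of $\U(\frakg)$, the sub-adjacent product $*_\rhd=x_1\cdot(x_2\rhd y)$ descends to $\U(\frakg)_\rhd$, so that $q$ is a surjective $*_\rhd$-algebra map whose kernel, as a vector space, is $J$.

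Then I would evaluate both composites on an arbitrary word $x_1\cdots x_n$ with $x_1,\dots,x_n\in\frakg$. On the one hand, $K^{-1}(x_1\cdots x_n)=x_1*_\rhd\cdots*_\rhd x_n$ by~\eqref{eq:K-map-inv}, so, since $q$ is a $*_\rhd$-morphism sending each letter $x_i$ to its class $\bar x_i$, we get $q\bigl(K^{-1}(x_1\cdots x_n)\bigr)=\bar x_1*_\rhd\cdots*_\rhd\bar x_n$ in $\U(\frakg)_\rhd$. On the other hand, $\phi\bigl(p_\rhd(x_1\cdots x_n)\bigr)=\phi(\overline{x_1\cdots x_n})=\bar x_1*_\rhd\cdots*_\rhd\bar x_n$ by the very definition of $\phi$ in Theorem~\ref{th:post-uea}. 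The two agree, and since such words span $\Ten(\frakg)$, the identity $q\circ K^{-1}=\phi\circ p_\rhd$ follows, which is the claim.

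The conceptual heart of the matter, and the point I would verify to make the descent transparent, is that the sub-adjacent product realises the sub-adjacent bracket on primitives: because every $x\in\frakg$ is $\Delta_\shuffle$-primitive, the definition~\eqref{post-rbb-1} unwinds on single letters to $x*_\rhd y=xy+x\rhd y$, whence in $\U(\frakg)_\rhd$
\begin{equation*}
\bar x*_\rhd\bar y-\bar y*_\rhd\bar x=(\bar x\bar y-\bar y\bar x)+\overline{x\rhd y}-\overline{y\rhd x}=\overline{[x,y]_\frakg}+\overline{x\rhd y}-\overline{y\rhd x}=\overline{[x,y]_{\frakg_\rhd}}
\end{equation*}
by the definition~\eqref{eq:post-L-sub} of $[\cdot,\cdot]_{\frakg_\rhd}$. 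This identity shows directly that $q\circ K^{-1}$ annihilates the generators of $J_\rhd$, hence all of $J_\rhd$ (because $K^{-1}$ is a $\cdot$-to-$*_\rhd$ algebra isomorphism and $q$ a $*_\rhd$-algebra map, so the ideal is carried through generator by generator); this reconfirms, independently of the prior knowledge that $\phi$ is well defined, that $q\circ K^{-1}$ factors through $\U(\frakg_\rhd)$. I expect no serious obstacle beyond bookkeeping: the only care needed is to keep the two ideals $J,J_\rhd$ and the two products $\cdot,*_\rhd$ rigorously separated and to confirm that $q$ respects $*_\rhd$; once that is in place, the identification with $\phi$ is immediate from the matching formulas.
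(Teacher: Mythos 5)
Your proof is correct and rests on the same ingredients as the paper's: the identity $x*_\rhd y-y*_\rhd x-[x,y]_{\frakg_\rhd}=xy-yx-[x,y]_\frakg$ on primitive elements, the closed formula~\eqref{eq:K-map-inv} exhibiting $K^{-1}$ as a $\cdot\,$-to-$*_\rhd$ algebra morphism, and the fact that the ideal $J$ defining $\U(\frakg)$ is also a ($*_\rhd$-)Hopf ideal of $\Ten(\frakg)_\rhd$. The paper packages this as the containment $K^{-1}(J_\rhd)\subseteq J$ verified on generators, whereas you package it as the commutativity $q\circ K^{-1}=\phi\circ p_\rhd$ verified on words (and then re-derive the generator computation as a check); these are equivalent presentations of the same argument.
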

\begin{proof}
Let $J=(xy-yx-[x,y]_\frakg\,|\, x,y\in\frakg)$ and $J_\rhd=(xy-yx-[x,y]_{\frakg_\rhd}\,|\, x,y\in\frakg)$ be the Hopf ideals of $\Ten(\frakg)$ such that $\U(\frakg)=\Ten(\frakg)/J$ and $\U(\frakg_\rhd)=\Ten(\frakg)/J_\rhd$ respectively.

We only need to show that $K^{-1}(J_\rhd)\subseteq J$.  For any letters $x,y\in\frakg$ and words $X,Y\in \Ten(\frakg)$,
\begin{eqnarray*}
K^{-1}(xy-yx-[x,y]_{\frakg_\rhd})&\stackrel{\eqref{eq:K-map-inv}}{=}&
x\circ  y-y\circ  x-[x,y]_{\frakg_\rhd}\\
&\stackrel{\eqref{post-rbb-1},\,\eqref{eq:post-L-sub}}{=} & (xy + x\rhd y) - (yx+y\rhd x)-(x\rhd y - y\rhd x + [x,y]_\frakg) \\
&=& xy-yx-[x,y]_\frakg,
\end{eqnarray*}
and then
\begin{eqnarray*}
K^{-1}(X(xy-yx-[x,y]_{\frakg_\rhd})Y)
&=&K^{-1}(X)\circ  K^{-1}(xy-yx-[x,y]_{\frakg_\rhd})\circ  K^{-1}(Y)\\
&=&K^{-1}(X)\circ (xy-yx-[x,y]_\frakg)\circ  K^{-1}(Y)\in J,
\end{eqnarray*}
as we have known that $J$ is a Hopf ideal of $\Ten(V)_\rhd$.  Since $\phi$ is invertible, $K^{-1}(J_\rhd)= J$ indeed.
\end{proof}

Correspondingly, we obtain a closed inverse formula for the Guin-Oudom isomorphism $\phi$.
\begin{theorem}\label{thm:hopf-iso-inv}
The inverse $\phi^{-1}:\U(\frakg)_\rhd\to \U(\frakg_\rhd)$ of the Guin-Oudom isomorphism is given by
\begin{equation}\label{eq:hopf-iso-inv}
\phi^{-1}(x_1\cdots x_n)=\sum_\pi(-1)^{n-|\pi|} [x_{B_1}]\cdots [x_{B_{|\pi|}}],\quad
\forall x_1,\dots,x_n\in \frakg,
\end{equation}
where the sum ranges over all set partitions  $\pi=(B_1,\dots,B_{|\pi|})$ of $[n]$ with blocks $B_1,\dots,B_{|\pi|}$ satisfying $\max B_1<\cdots<\max B_{|\pi|}$, and
$[x_I]\coloneqq\sum_{w\in S_{r-1}}[x_{i_1},\dots, x_{i_{r-1}};x_{i_r}]_w\in \frakg$ for any $I=\{i_1<\cdots< i_r\}\subseteq[n]$, with the convention that $[x_{\{i\}}]=x_i$, $\forall i\in [n]$.
\end{theorem}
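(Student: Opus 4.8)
The plan is to obtain $\phi^{-1}$ as the descent of the closed-form $K$-map and then project its formula through the quotient. Recall that Proposition~\ref{prop:K-map-OG} establishes the equality of Hopf ideals $K^{-1}(J_\rhd)=J$, i.e. that $K^{-1}\colon\Ten(\frakg)\to\Ten(\frakg)_\rhd$ induces the Oudom-Guin isomorphism $\phi\colon\U(\frakg_\rhd)\to\U(\frakg)_\rhd$. Inverting this relation gives $K(J)=J_\rhd$, so the $K$-map $K\colon\Ten(\frakg)_\rhd\to\Ten(\frakg)$ descends along the quotients by $J$ and by $J_\rhd$ to a map $\U(\frakg)_\rhd=\Ten(\frakg)_\rhd/J\to\Ten(\frakg)/J_\rhd=\U(\frakg_\rhd)$, and this descent is precisely $\phi^{-1}$. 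Here one regards $(\frakg,\rhd)$ as a magma and equips $\Ten(\frakg)$ with the post-Hopf structure of Theorem~\ref{magma-to-post} taking $V=\frakg$, so that the closed formula~\eqref{eq:K-map'} of Proposition~\ref{prop:K-map'} is available verbatim on $\Ten(\frakg)$ — the same device already used in Proposition~\ref{prop:ten-uea}.

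First I would fix a word $x_1\cdots x_n$ with $x_1,\dots,x_n\in\frakg$ as a representative in $\Ten(\frakg)_\rhd$ of the corresponding monomial in $\U(\frakg)_\rhd$, so that $\phi^{-1}(x_1\cdots x_n)=q\big(K(x_1\cdots x_n)\big)$, where $q\colon\Ten(\frakg)\to\Ten(\frakg)/J_\rhd=\U(\frakg_\rhd)$ is the canonical projection. Substituting~\eqref{eq:K-map'} then yields
\[
\phi^{-1}(x_1\cdots x_n)=\sum_\pi(-1)^{n-|\pi|}\,q\big(x^\btr_{B_1}\cdots x^\btr_{B_{|\pi|}}\big),
\]
summed over the set partitions $\pi=(B_1,\dots,B_{|\pi|})$ of $[n]$ with $\max B_1<\cdots<\max B_{|\pi|}$. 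Since each factor $x^\btr_{B_i}=x_{i_1}\cdots x_{i_{r-1}}\btr x_{i_r}$ already lies in $\frakg$, its image under $q$ is exactly the generator $[x_{B_i}]$ of the statement; and because $q$ carries concatenation to the product of $\U(\frakg_\rhd)$, the word $x^\btr_{B_1}\cdots x^\btr_{B_{|\pi|}}$ maps to the product $[x_{B_1}]\cdots[x_{B_{|\pi|}}]$. This reproduces~\eqref{eq:hopf-iso-inv} term by term.

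The step I expect to require the most care is the bookkeeping of the two quotients rather than any computation: one must keep straight that the word $x_1\cdots x_n$ is read inside $\Ten(\frakg)_\rhd$ on the source side (so that $K$ applies), whereas its image $K(x_1\cdots x_n)$ is read inside $\Ten(\frakg)$ on the target side (so that $q$ projects into $\U(\frakg_\rhd)$), and that the elements $x^\btr_{B_i}$ — manufactured from the magma product $\rhd$ alone — are legitimately reinterpreted as the Lie generators $[x_{B_i}]\in\frakg_\rhd$ after projection. The only genuine structural input beyond this identification and the multiplicativity of $q$ is the equality $K(J)=J_\rhd$, which Proposition~\ref{prop:K-map-OG} already provides; granting it, the theorem is a direct transport of the closed formula of Proposition~\ref{prop:K-map'}.
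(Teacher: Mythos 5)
Your proposal is correct and follows essentially the same route as the paper: both deduce the formula by transporting the closed expression~\eqref{eq:K-map'} for $K$ through the quotients via Proposition~\ref{prop:K-map-OG}, and both handle the key subtlety that each $x^\btr_{B_i}$ must be computed in $\frakg$ via the combinatorial formula~\eqref{eq:btr_comb} before projecting (since $\btr$ does not descend to $\U(\frakg_\rhd)$), which is exactly the caveat the paper records by renaming $x^\btr_{B_i}$ as $[x_{B_i}]$. Your version merely makes the bookkeeping of the two ideals $J$ and $J_\rhd$ more explicit than the paper does.
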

\begin{proof}
According to Proposition~\ref{prop:K-map-OG},
the closed formula~\eqref{eq:K-map'} for the $K$-map $K$ is also available for $\phi^{-1}$, but we  need to add the following comment on it.

In general, the product $\btr$ on $\Ten(\frakg)$ can not be induced on $\U(\frakg_\rhd)$ neither. Therefore, any element $x_{B_i}^\btr$ in~\eqref{eq:K-map'} should be interpreted by the combinatorial characterization~\eqref{eq:btr_comb} directly, and we rewrite it as $[x_{B_i}]$.
\end{proof}

\begin{remark}
Recall the Bell polynomial (see~\cite{ELM1} for details)
$$B_{n,k}(x_1,\dots,x_l)=\sum_{j_1+j_2+\cdots+j_l=k\atop
j_1+2j_2+\cdots+lj_l=n}\dfrac{n!}{j_1!\cdots j_l!}\left(\dfrac{x_1}{1!}\right)^{j_1}\cdots\left(\dfrac{x_l}{l!}\right)^{j_l}$$
with $l=n-k+1$, the coefficient
of $x_1^{j_1}\cdots x_l^{j_l}$
in which is equal to the number of set partitions of $[n]$ with $j_r$ blocks of size $r$ for any $r\in[l]$. In particular,
$$B_{n,k}(0!,1!,\dots,(l-1)!)=\sum_{j_1+j_2+\cdots+j_l=k\atop
j_1+2j_2+\cdots+lj_l=n}\dfrac{n!}{j_1!\cdots j_l!1^{j_1}\cdots l^{j_l}}$$
is exactly the number of permutations in $S_n$ with a factorization into $k$ disjoint cycles.

Hence, the number of terms of $\phi^{-1}(x_1\cdots x_n)$ when expanded as in Eq.~\eqref{eq:hopf-iso-inv} is
$$\sum_{k=1}^n B_{n,k}(0!,1!,\dots,(l-1)!)=n!,$$
Actually, it can also be seen directly by induction on $n$ from the recursive formula~\eqref{eq:K-map}.
\end{remark}

\begin{exam}
There are 15 set partitions of $[4]$ listed as follows,
\[\begin{array}{l}
1|2|3|4,\quad 12|3|4,\quad 2|13|4,\quad 2|3|14,\quad 1|23|4,\quad 1|3|24,\quad 1|2|34,\\[.5em]
12|34,\quad 13|24,\quad 23|14,\quad 1|234,\quad 2|134,\quad 3|124,\quad 123|4,\quad 1234.
\end{array}\]
Then Eq.~\eqref{eq:hopf-iso-inv} shows that the expansion of $\phi^{-1}(x_1x_2x_3x_4)$ has $4!=24$ terms as follows,
\begin{align*}
\phi&^{-1}(x_1x_2x_3x_4) =[x_1][x_2][x_3][x_4]\\
&\quad -[x_{12}][x_3][x_4]
-[x_2][x_{13}][x_4]-[x_2][x_3][x_{14}]-[x_1][x_{23}][x_4]-[x_1][x_3][x_{24}]-[x_1][x_2][x_{34}]\\
&\quad +[x_{12}][x_{34}]+[x_{13}][x_{24}]+[x_{23}][x_{14}]
+[x_1][x_{234}]+[x_2][x_{134}]+[x_3][x_{124}]+[x_{123}][x_4]-[x_{1234}]\\
&= x_1x_2x_3x_4
-(x_1\rhd x_2)x_3x_4 -x_2(x_1\rhd x_3)x_4 -x_2x_3(x_1\rhd x_4)\\
&\quad - x_1(x_2\rhd x_3)x_4 -x_1x_3(x_2\rhd x_4)-x_1x_2(x_3\rhd x_4) \\
&\quad+(x_1\rhd x_2)(x_3\rhd x_4)+(x_1\rhd x_3)(x_2\rhd x_4)+(x_2\rhd x_3)(x_1\rhd x_4)\\
&\quad+ x_1((x_2\rhd x_3)\rhd x_4)+ x_1(x_3\rhd (x_2\rhd x_4))+ x_2((x_1\rhd x_3)\rhd x_4)+ x_2(x_3\rhd (x_1\rhd x_4))\\
&\quad +x_3((x_1\rhd x_2)\rhd x_4)+ x_3(x_2\rhd (x_1\rhd x_4))
+((x_1\rhd x_2)\rhd x_3)x_4+ (x_2\rhd (x_1\rhd x_3))x_4\\
&\quad -((x_1\rhd x_2)\rhd x_3)\rhd x_4 - (x_1\rhd x_3)\rhd (x_2\rhd x_4) - (x_2\rhd (x_1\rhd x_3))\rhd x_4 \\
&\quad- (x_2\rhd x_3)\rhd (x_1\rhd x_4) - x_3\rhd ((x_1\rhd x_2)\rhd x_4) - x_3\rhd (x_2\rhd (x_1\rhd x_4)).
\end{align*}

\end{exam}

In the end of this section, we give a different perspective on the inverse of the Guin-Oudom isomorphism $\phi$.
According to Theorem~\ref{thm:subHopf}, $\U(\frakg)$ is a  $\U(\frakg)_\rhd$-module bialgebra via the action $\rhd$. Correspondingly, we can twist $\rhd$ by $\phi$ to define a $\U(\frakg_\rhd)$-module bialgebra action $\rightharpoonup$ on $\U(\frakg)$ by
$$X\rightharpoonup Y\coloneqq \phi(X)\rhd Y,\quad \forall X\in \U(\frakg_\rhd),\,Y\in \U(\frakg).$$
\begin{prop}\label{prop:rRB}
The inverse $\phi^{-1}:\U(\frakg)\to\U(\frakg_\rhd)$ is a relative Rota-Baxter operator with respect to $\rightharpoonup$ in the sense of \cite[Def.~3.1]{LST}. Namely, $\phi^{-1}$ is a coalgebra homomorphism satisfying
$$\phi^{-1}(X)\phi^{-1}(Y)=\phi^{-1}(X_1(\phi^{-1}(X_2)\rightharpoonup Y)),\quad \forall X,Y\in \U(\frakg).$$
\end{prop}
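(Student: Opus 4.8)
The plan is to deduce the statement entirely from the fact that $\phi$ is a Hopf algebra isomorphism (Theorem~\ref{th:post-uea}), together with the definition of the twisted action $\rightharpoonup$ and the identity $X_1(X_2\rhd Y)=X*_\rhd Y$ built into $*_\rhd$. No structural input beyond these is needed, so the argument will be short and formal.

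First I would verify the coalgebra homomorphism half of the claim. Since $\phi:\U(\frakg_\rhd)\to\U(\frakg)_\rhd$ is a Hopf algebra isomorphism and the coalgebra underlying $\U(\frakg)_\rhd$ coincides with that of $\U(\frakg)$, the map $\phi$ is in particular a bijective coalgebra morphism; the inverse of any bijective coalgebra morphism is again a coalgebra morphism, so $\phi^{-1}:\U(\frakg)\to\U(\frakg_\rhd)$ is a coalgebra homomorphism.

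Next I would unwind the right-hand side of the relative Rota-Baxter identity. By the definition of $\rightharpoonup$ one has $\phi^{-1}(X_2)\rightharpoonup Y=\phi(\phi^{-1}(X_2))\rhd Y=X_2\rhd Y$, whence
$$X_1\big(\phi^{-1}(X_2)\rightharpoonup Y\big)=X_1(X_2\rhd Y)\stackrel{\eqref{post-rbb-1}}{=}X*_\rhd Y,$$
the product in $\U(\frakg)_\rhd$. Applying $\phi^{-1}$ and using that $\phi^{-1}$ is an algebra homomorphism from $(\U(\frakg)_\rhd,*_\rhd)$ to $\U(\frakg_\rhd)$ then yields $\phi^{-1}(X*_\rhd Y)=\phi^{-1}(X)\phi^{-1}(Y)$, which is exactly the left-hand side.

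I do not anticipate a genuine obstacle, as the computation collapses to a single line once the twisted action is inserted. The only point demanding care is keeping track of which product and coproduct are meant on each side: the coproduct $\Delta$ is common to $\U(\frakg)$ and $\U(\frakg)_\rhd$; the multiplication on the left-hand side is the associative product of $\U(\frakg_\rhd)$; and the multiplication hidden inside $X_1(X_2\rhd Y)$ is the associative product of $\U(\frakg)$, repackaged as $*_\rhd$ via Eq.~\eqref{post-rbb-1}. Verifying that these conventions line up is precisely what legitimizes the one-line reduction.
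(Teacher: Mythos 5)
Your proposal is correct and is essentially the paper's own argument run in the opposite direction: the paper applies $\phi$ to both sides and observes the identity becomes Eq.~\eqref{post-rbb-1}, while you unwind the right-hand side to $\phi^{-1}(X*_\rhd Y)$ and apply the algebra-morphism property of $\phi^{-1}$; these are the same reduction. The extra care you take with the coalgebra half and with which product lives where is sound but does not change the substance.
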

\begin{proof}
Applying $\phi$ to both sides of the stated relative Rota-Baxter identity, one can see that it is equivalent to Eq.~\eqref{post-rbb-1} as $\phi$ is an algebra isomorphism  from $\U(\frakg_\rhd)$ to $\U(\frakg)_\rhd$.
\end{proof}
\begin{remark}
In \cite[Remark 6.13]{EMQ}, the authors pointed out that the post-Lie Magnus expansion is a relative Rota-Baxter operator connecting two formal groups.
Proposition~\ref{prop:rRB} is a generalization of it in the context of Hopf algebras.
\end{remark}

\section{The Grossman-Larson Hopf algebra of ordered trees}\label{sec:GL}
In this section, we write down the cancellation-free antipode formula for the Grossman-Larson Hopf algebra of ordered trees via Eqs.~\eqref{eq:antipode} and \eqref{eq:btr_comb}.
First review the Grossman-Larson Hopf algebra of ordered trees defined in~\cite{GL}.
For the definition of ordered trees (also called planar rooted trees), see~\cite[page 573]{St}.

\subsection{Basic terminology}
Let $\huaO$ be the set of isomorphism classes of ordered trees, which is presented by
\[\huaO= \Big\{
		\scalebox{0.6}{\ab}, \scalebox{0.6}{\aabb},
		\scalebox{0.6}{\aababb}, \scalebox{0.6}{\aaabbb},
        \scalebox{0.6}{\aabababb},\scalebox{0.6}{\aaabbabb},
		\scalebox{0.6}{\aabaabbb}, \scalebox{0.6}{\aaababbb}, \scalebox{0.6}{\aaaabbbb},\scalebox{0.6}{\aababababb},
\scalebox{0.6}{\aaabbababb},\scalebox{0.6}{\aabaabbabb},
\scalebox{0.6}{\aababaabbb},\scalebox{0.6}{\aaababbabb},
\scalebox{0.6}{\aabaababbb},\scalebox{0.6}{\aaabbaabbb},
\scalebox{0.6}{\aaabababbb},\scalebox{0.6}{\aaaabbbabb},
\scalebox{0.6}{\aabaaabbbb},\scalebox{0.6}{\aaaababbbb},
\scalebox{0.6}{\aaaabbabbb},\scalebox{0.6}{\aaabaabbbb},\scalebox{0.6}{\aaaaabbbbb},\ldots
			\Big\}.
\]

Let $\bk\{\huaO\}$ be the free $\bk$-module generated by  $\huaO$. Denote $\Ten(\bk\{\huaO\})$ the tensor algebra over $\bk\{\huaO\}$. We identify $\Ten(\bk\{\huaO\})$ with the free $\bk$-algebra generated by $\huaO$, and interpret its basis of pure tensors of trees as the set of isomorphism classes of ordered forests denoted by $\huaF$. Namely, $\Ten(\bk\{\huaO\})=\bk\{\huaF\}$.
In particular, the unit $1$ is the empty forest $\emptyset$.

Define the {\bf left grafting operator} $\curvearrowright:\bk\{\huaO\}\otimes \bk\{\huaO\}\to \bk\{\huaO\}$ by
\begin{eqnarray}\label{eq:left-graft}
\tau\curvearrowright \omega=\sum_{s\in {\rm Nodes}(\omega)}\tau\curvearrowright_{s}\omega,\quad \forall \tau,\omega\in \huaO,
\end{eqnarray}
where $\tau\curvearrowright_{s}\omega$ is the ordered tree resulting from attaching the root of $\tau$ to  the node $s$ of the tree $\omega$ from the left. For example, we have
\begin{eqnarray*}
\scalebox{0.6}{\aabb}\curvearrowright \scalebox{0.6}{\aabababb}=\scalebox{0.6}{\aaabbabababb}+\scalebox{0.6}{\aaaabbbababb}+\scalebox{0.6}{\aabaaabbbabb}+\scalebox{0.6}{\aababaaabbbb}\,.
\end{eqnarray*}

Let $B^+:\bk\{\huaF\}\to \bk\{\huaO\}$ be the linear map producing an ordered tree $\tau$ from any ordered forest $\tau_1\cdots\tau_m$ by grafting the $m$ trees $\tau_1,\dots,\tau_m$ on a new root $\scalebox{0.6}{\ab}$ in order. In particular, set $B^+(1)=\scalebox{0.6}{\ab}$ by convention.
For example, we have
\begin{eqnarray*}
B^+(\scalebox{0.6}{\aabb\,\,\,\aababb})=\scalebox{0.6}{\aaabbaababbb}.
\end{eqnarray*}

Let $B^-:\bk\{\huaO\}\to \bk\{\huaF\}$  be the linear map
 producing an ordered forest from any ordered tree $\tau$ by removing its root.
For example, we have
\begin{eqnarray*}
B^-(\scalebox{0.6}{\aababaaabbbb})=\scalebox{0.6}{\ab\,\ab\,\,\aaabbb}.
\end{eqnarray*}

\begin{defn}[\cite{GL}]
Denote the Grossman-Larson Hopf algebra of ordered trees by $$\huaH_{GL}=(\bk\{\huaO\},\circ_{GL},\Delta_{GL},S_{GL}).$$
Therein, the Grossman-Larson product $\circ_{GL}$ on $\bk\{\huaO\}$ is defined by letting $\tau \circ_{GL} \omega$ be the partial sum of
$\tau_1\curvearrowright(\tau_2\curvearrowright\cdots(\tau_m\curvearrowright\omega)\cdots)$ containing those trees with all $\tau_1,\dots,\tau_m$ grafted {\it only} on $\omega$,
when $\tau,\,\omega\in \huaO$ such that $\tau=B^+(\tau_1\cdots\tau_m)$.

The coproduct $\Delta_{GL}$ on $\bk\{\huaO\}$ is given by
$$\Delta_{GL}(\tau)=\sum_{I\subseteq[m]}\tau^+_I\otimes\tau^+_{[m]\setminus I}$$
for any $\tau=B^+(\tau_1\cdots\tau_m)\in \huaO$, with the notation $\tau^+_I=B^+(\prod_{i\in I}\tau_i)$ for any ordered subset $I\subseteq[m]$.
\end{defn}

Recall Takeuchi's well-known recurrent antipode formula as follows.
\begin{theorem}[\cite{Ta}]\label{thm:ta}
Any graded, connected $\bk$-bialgebra $(B,m,u,\Delta,\vep)$ is a Hopf algebra with its antipode $S$ given by
\begin{equation}\label{eq:ta}
S =\sum_{i\ge 0} (-1)^i p^{\star i}=\sum_{i\ge 0} (-1)^i m^{(i-1)} p^{\otimes i} \Delta^{(i-1)},
\end{equation}
where $\star$ is the convolution product, $u$ is the unit map, $p\coloneqq\id-u\vep$, $m^{(-1)} \coloneqq u$, and $\Delta^{(-1)} \coloneqq \vep$.
\end{theorem}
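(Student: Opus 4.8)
The plan is to work entirely inside the convolution algebra $(\Hom(B,B),\star)$, whose unit is $u\vep$, and to exhibit $S$ as the two-sided convolution inverse of $\id_B$. The starting point is the decomposition $\id = u\vep + p$ with $p = \id - u\vep$. Formally inverting a ``unit plus something small'' element by a geometric series suggests $S = (u\vep + p)^{\star(-1)} = \sum_{i\ge0}(-1)^i p^{\star i}$, which is exactly the claimed formula once one records that $p^{\star i} = m^{(i-1)}p^{\otimes i}\Delta^{(i-1)}$ is the $i$-fold convolution power, the conventions $m^{(-1)}=u$ and $\Delta^{(-1)}=\vep$ reproducing the $i=0$ term $u\vep$.

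The crux, and essentially the only place the hypotheses are used, is to make sense of this infinite sum as an honest linear map. Here I would invoke that $B$ is graded and connected, so $B_0 = \bk\cdot 1$ and $p$ annihilates degree $0$. Evaluating $p^{\star i}$ on a homogeneous $x$ of degree $n$, the graded coproduct gives $\Delta^{(i-1)}(x) = \sum x_1\otimes\cdots\otimes x_i$ with $\deg x_1 + \cdots + \deg x_i = n$; since $p^{\otimes i}$ kills any tensor having a degree-$0$ factor, only terms with every $\deg x_j \ge 1$ survive, forcing $i \le n$. Hence $p^{\star i}(x) = 0$ whenever $i > \deg x$, so on each homogeneous component the defining sum has only finitely many nonzero terms and $S$ is well defined. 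I expect this local-finiteness argument to be the main (indeed essentially the sole) obstacle; everything before and after it is formal.

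It then remains to confirm that $S$ really inverts $\id$. I would compute $\id \star S$ and $S \star \id$ by a telescoping cancellation, $(u\vep + p)\star\sum_{i\ge0}(-1)^i p^{\star i} = \sum_{i\ge0}(-1)^i p^{\star i} + \sum_{i\ge0}(-1)^i p^{\star(i+1)} = u\vep$, the two series differing only by a reindexing and cancelling all terms but $p^{\star0}=u\vep$; the symmetric computation on the other side is identical, using that all the maps in play are powers of the single element $p$ and hence commute under $\star$. Since a bialgebra equipped with a two-sided convolution inverse of the identity is by definition a Hopf algebra with that inverse as antipode, this simultaneously establishes that $B$ is a Hopf algebra and that its antipode is the stated $S$, completing the proof.
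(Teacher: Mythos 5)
Your proof is correct and complete: the paper itself offers no proof of this statement (it is quoted verbatim from Takeuchi's paper \cite{Ta}), and your argument --- the geometric-series inversion of $\id = u\vep + p$ in the convolution algebra, with local finiteness of $\sum_i (-1)^i p^{\star i}$ guaranteed by connectedness ($p$ kills $B_0=\bk\cdot 1$, so at most $\deg x$ convolution factors of $p$ can survive on a homogeneous $x$) and the telescoping verification that the sum is a two-sided convolution inverse of $\id$ --- is exactly the standard proof of this classical result. No gaps.
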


Applying Takeuchi's antipode formula~\eqref{eq:ta} to the graded, connected bialgebra $(\bk\{\huaO\},\circ_{GL},\Delta_{GL})$, one obtains the following formula of $S_{GL}$ (see e.g.~\cite[Lemma~3.1]{Zh}),
$$
S_{GL}(\tau)=\sum_{\pi}(-1)^{|\pi|}\tau^+_{B_1}\circ_{GL}\cdots\circ_{GL}\tau^+_{B_{|\pi|}}
$$
for any $\tau=B^+(\tau_1\cdots\tau_m)\in \huaO$, where the sum ranges over all set partitions $\pi=(B_1,\dots,B_{|\pi|})$ of $[m]$.

\subsection{A cancellation-free antipode formula for the Grossman-Larson Hopf algebra}
In the rest of the paper, we aim to find a cancellation-free antipode formula for $\huaH_{GL}$.

The magma algebra $(\bk\{\huaO\},\curvearrowright)$ given by Eq.~\eqref{eq:left-graft} is the free magma algebra on one generator, namely a single root $\scalebox{0.6}{\ab}$. Foissy's construction tells us that the left grafting operator $\curvearrowright$ on $\bk\{\huaO\}$ extends to a product $\rhd$ on $\bk\{\huaF\}$, so that
$$(\bk\{\huaF\},\cdot\,,\Delta_\shuffle,S,\rhd)$$
is the universal enveloping algebra of the free post-Lie algebra on one generator, namely a post-Hopf algebra. See \cite{ELM,Foissy,ML} for more details about free post-Lie algebras and the universal enveloping algebras of them.

As indicated in \cite[Proposition~4.1]{OG} and \cite{MW}, we notice that
\begin{prop}\label{prop:GL_iso}
The sub-adjacent Hopf algebra $\bk\{\huaF\}_\rhd=(\bk\{\huaF\},{\circ },\Delta_\shuffle,S_\rhd)$ is isomorphic to the Grossman-Larson Hopf algebra $\huaH_{GL}$ of ordered trees via the root grafting operator $B^+$. Also, using the post-Hopf product $\rhd$, the multiplication $\circ $ can be given by
\begin{eqnarray*}
\huaX \circ  \huaY&=&B^-(\huaX\rhd B^+(\huaY)),\quad \forall \huaX,\huaY\in\huaF.
\end{eqnarray*}
Equivalently,
$$\tau \circ_{GL} \omega = B^-(\tau)\rhd \omega,\quad \forall \tau,\omega\in\huaO.$$
\end{prop}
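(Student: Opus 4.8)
The plan is to establish the isomorphism $B^+ : \bk\{\huaF\}_\rhd \to \huaH_{GL}$ by showing that $B^+$ intertwines the two products, i.e.\ that $B^+(\huaX *_\rhd \huaY) = B^+(\huaX) \circ_{GL} B^+(\huaY)$ for all ordered forests $\huaX, \huaY \in \huaF$, and then checking compatibility with the coproducts and antipodes. Since $B^+$ is manifestly a bijection from $\huaF$ onto $\huaO$ (it sends a forest to the tree obtained by grafting onto a fresh root, with inverse $B^-$), the bulk of the work is verifying that it is a morphism of the relevant structures. The coproduct compatibility should be essentially immediate: the definition $\Delta_{GL}(\tau) = \sum_{I \subseteq [m]} \tau^+_I \otimes \tau^+_{[m]\setminus I}$ for $\tau = B^+(\tau_1 \cdots \tau_m)$ is exactly the image under $B^+ \otimes B^+$ of the coshuffle coproduct $\Delta_\shuffle(\tau_1 \cdots \tau_m) = \sum_{I \subseteq [m]} \prod_{i \in I}\tau_i \otimes \prod_{i \notin I}\tau_i$, so $(B^+ \otimes B^+)\circ \Delta_\shuffle = \Delta_{GL} \circ B^+$ by definition. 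Once $B^+$ is shown to be a bialgebra isomorphism, compatibility with the antipodes $S_\rhd$ and $S_{GL}$ is automatic, since the antipode of a Hopf algebra is unique.

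\textbf{The central step} is therefore the product identity, which I would reduce to the two displayed formulas in the statement. The key is to unwind the definition of the extended product $\rhd$ on $\bk\{\huaF\}$ from Theorem~\ref{magma-to-post} and compare it with the Grossman-Larson product. First I would verify the ``equivalent'' form $\tau \circ_{GL} \omega = B^-(\tau) \rhd \omega$ for single trees $\tau, \omega \in \huaO$. Writing $\tau = B^+(\tau_1 \cdots \tau_m)$ so that $B^-(\tau) = \tau_1 \cdots \tau_m$, the post-Hopf product $(\tau_1 \cdots \tau_m)\rhd \omega$ is defined by the same nested recursion that, on the magma algebra $(\bk\{\huaO\},\curvearrowright)$, produces exactly the sum of all ways of grafting each of $\tau_1,\dots,\tau_m$ somewhere onto $\omega$ \emph{only} (not onto each other)---this is precisely the defining description of $\tau \circ_{GL}\omega$ as the partial sum of $\tau_1 \curvearrowright(\cdots(\tau_m \curvearrowright \omega)\cdots)$ in which all grafts land on $\omega$. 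I expect this to follow by induction on $m$: the inductive correction terms $-\sum_i (\cdots (\tau_1 \rhd \tau_i)\cdots)\rhd \omega$ in the recursion for $(x_1\otimes\cdots\otimes x_n)\rhd a$ are exactly what removes, via inclusion-exclusion, the unwanted configurations in which some $\tau_j$ is grafted onto another $\tau_i$ rather than onto $\omega$.

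\textbf{The general forest-level formula} $\huaX *_\rhd \huaY = B^-(\huaX \rhd B^+(\huaY))$ would then be derived from this. Using $\huaX *_\rhd \huaY = \huaX_1 \cdot (\huaX_2 \rhd \huaY)$ from Eq.~\eqref{post-rbb-1} together with the coalgebra-homomorphism property of $\rhd$ (Eq.~\eqref{Post-2}), and the compatibility of $B^+, B^-$ with the grafting action, I would show that $B^+(\huaX *_\rhd \huaY) = B^+(\huaX) \circ_{GL} B^+(\huaY)$; applying $B^-$ to both sides and using $B^- B^+ = \id$ gives the stated form. \textbf{The main obstacle} I anticipate is the bookkeeping in the single-tree identity: one must argue carefully that the recursively-defined $\rhd$-action on a word of trees coincides, term by term, with the ``graft only onto $\omega$'' partial sum, making precise the inclusion-exclusion cancellation of graftings among the $\tau_i$ themselves. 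This is a combinatorial matching of two recursions rather than a deep structural point, and I would lean on the fact that both sides satisfy the identical recursion in $m$ once the base cases $1 \rhd \omega = \omega$ and $\tau_i \rhd \omega = \tau_i \curvearrowright \omega$ are checked.
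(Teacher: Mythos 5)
Your proposal is correct in substance, but note that the paper itself supplies no proof of this proposition: it is stated with the preamble ``As indicated in [OG, Proposition~4.1] and [MW], we notice that\dots'', i.e.\ the result is attributed to Oudom--Guin and Munthe-Kaas--Wright rather than argued. So you are reconstructing a proof where the paper cites one. Your reconstruction identifies the right decomposition: the coproduct compatibility $(B^+\otimes B^+)\circ\Delta_\shuffle=\Delta_{GL}\circ B^+$ is indeed true by inspection of the definition of $\Delta_{GL}$, the antipode compatibility is automatic by uniqueness of antipodes once the bialgebra isomorphism is in hand, and the entire content sits in the single-tree identity $(\tau_1\cdots\tau_m)\rhd\omega=\tau\circ_{GL}\omega$. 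Your inclusion--exclusion induction on $m$ for that identity is the standard argument and is sound: writing the recursion as $(\tau_1\cdots\tau_m)\rhd\omega=\tau_1\curvearrowright\big((\tau_2\cdots\tau_m)\rhd\omega\big)-\sum_{i=2}^m(\tau_2\cdots(\tau_1\curvearrowright\tau_i)\cdots\tau_m)\rhd\omega$, the inductive hypothesis identifies the subtracted terms exactly with the configurations in which $\tau_1$ lands on some $\tau_i$ rather than on $\omega$, and the planarity conventions (left grafting, with $\tau_m$ attached first and $\tau_1$ last) match on both sides. Two small points to tighten if you write this out: first, in the forest-level reduction, Eq.~\eqref{Post-2} governs how $\rhd$ distributes over a \emph{product} in the second argument, whereas $B^+(\huaY)$ is a single letter of $\bk\{\huaO\}$; the correct move is to split the node set of $B^+(\huaY)$ into the new root and the nodes of $\huaY$, which is what produces the $\huaX_1\cdot(\huaX_2\rhd\huaY)$ decomposition of Eq.~\eqref{post-rbb-1}. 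Second, it is worth saying explicitly that $B^+(1)=\scalebox{0.6}{\ab}$ matches the units on both sides.
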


By the definition~\eqref{eq:left-graft} of the left grafting operator on ordered trees, the combinatorial formula~\eqref{eq:btr_comb} for the product $\btr$ on $\bk\{\huaF\}$ can be expressed more concretely as follows.
\begin{prop}\label{prop:GL_graft}
Given any trees $\tau_1,\dots,\tau_m,\omega\in\huaO$, the product
$$\tau_1\cdots\tau_m\btr \omega=\sum_{w\in S_m}[\tau_1,\dots,\tau_m;\omega]_w$$
is equal to the sum of all left graftings with rootstock $\omega$ and then scions $\tau_1,\dots,\tau_m$ grafted in any order but under the constraint that
$\tau_i$ is left to $\tau_j$ for any $i>j$ whenever they are attached to the same node.
\end{prop}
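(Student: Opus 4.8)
The plan is to prove the identity by induction on $m$, matching the recursive structure of the product $\btr$ established in Lemma~\ref{lem:btr_recursive} against a corresponding recursion on the tree side. Since the rootstock $\omega$ and each scion $\tau_i$ are single ordered trees, hence letters of $\bk\{\huaF\}$, the post-Hopf product reduces to left grafting: $\tau_1\rhd\omega=\tau_1\curvearrowright\omega$ and $\tau_1\rhd\tau_i=\tau_1\curvearrowright\tau_i$ by Theorem~\ref{magma-to-post} and Eq.~\eqref{eq:left-graft}. I write $G(\tau_1,\dots,\tau_m;\omega)$ for the claimed combinatorial sum, namely the sum over all ordered trees obtained by attaching the root of each scion $\tau_k$ to some node of the tree grown on $\omega$ (a node of $\omega$, or a node of some $\tau_j$ with $j\neq k$, so that the result is a single ordered tree), under the rule that among scions sharing a parent node the higher-indexed one sits to the left. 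This extends linearly in the rootstock, so $G(\tau_2,\dots,\tau_m;\tau_1\curvearrowright\omega)$ is the sum over the nodes $s$ of $\omega$ of $G(\tau_2,\dots,\tau_m;\tau_1\circ_s\omega)$. The goal is then $\tau_1\cdots\tau_m\btr\omega=G(\tau_1,\dots,\tau_m;\omega)$, after which Proposition~\ref{prop:btr} rewrites the left-hand side as $\sum_{w\in S_m}[\tau_1,\dots,\tau_m;\omega]_w$.

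The base case $m=1$ is immediate, since $\tau_1\btr\omega=\tau_1\rhd\omega=\tau_1\curvearrowright\omega=G(\tau_1;\omega)$. For the inductive step I would partition the trees contributing to $G(\tau_1,\dots,\tau_m;\omega)$ by the host of the parent node of the lowest-indexed scion $\tau_1$: this parent lies either in $\omega$, or in exactly one original scion $\tau_i$ with $i\geq 2$ (never in $\tau_1$). Every resulting tree falls into exactly one class, since $\tau_1$ has a single parent node. Absorbing $\tau_1$ into the block hosting its parent---forming the new rootstock $\tau_1\curvearrowright\omega$ in the first class and the modified scion $\tau_1\curvearrowright\tau_i$ in the $i$-th class---identifies each class with the configurations counted by $G(\tau_2,\dots,\tau_m;\tau_1\curvearrowright\omega)$ and by $G(\tau_2,\dots,\tau_1\curvearrowright\tau_i,\dots,\tau_m;\omega)$ respectively. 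This yields
\[
G(\tau_1,\dots,\tau_m;\omega)=G(\tau_2,\dots,\tau_m;\tau_1\curvearrowright\omega)+\sum_{i=2}^{m}G(\tau_2,\dots,\tau_1\curvearrowright\tau_i,\dots,\tau_m;\omega),
\]
which is precisely Eq.~\eqref{eq:btr_recursive} read through $\rhd=\curvearrowright$. By Lemma~\ref{lem:btr_recursive} the product $\tau_1\cdots\tau_m\btr\omega$ obeys the same recursion with the same base value, so induction gives $\tau_1\cdots\tau_m\btr\omega=G(\tau_1,\dots,\tau_m;\omega)$, and Proposition~\ref{prop:btr} delivers the stated form.

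The main obstacle is not the algebraic recursion but checking that the planar bookkeeping is transported correctly at each step, i.e. that the left-grafting convention exactly enforces the constraint that $\tau_i$ lies left of $\tau_j$ for $i>j$ at a shared node. The key point is that grafting from the left makes the most recently attached scion the new leftmost child. Because $\tau_1$ is absorbed first, once it is grafted onto a node $s$ it becomes the leftmost child of $s$; any later scion $\tau_k$ with $k>1$ attached to the same node is grafted to its left, so $\tau_k$ ends up left of $\tau_1$, in agreement with $k>1$. Thus the smallest index $\tau_1$ correctly lands rightmost among the scions sharing its node, while the relative order of $\tau_2,\dots,\tau_m$ is governed by the induction hypothesis. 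One must also confirm that the nodes newly exposed inside the absorbed block---the nodes of $\tau_1$---account exactly for the configurations in which a later scion attaches to $\tau_1$, and that no tree is produced twice; both follow from the disjointness of the partition by the host of $\tau_1$'s parent. Verifying these planarity compatibilities, consistently through the recursion, is where the care is required.
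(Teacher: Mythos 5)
Your proposal is correct and follows essentially the same route as the paper: induction on the number $m$ of scions, matching the recursion of Lemma~\ref{lem:btr_recursive} (read through $\rhd=\curvearrowright$) against the partition of the grafting configurations by the host of $\tau_1$'s parent, with the observation that attaching $\tau_1$ first and grafting later scions from the left enforces the stated planarity constraint. Your write-up merely makes explicit the combinatorial bookkeeping that the paper's proof states more tersely.
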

\begin{proof}
We prove it by induction on the number $m$ of scions. It is clear when $m=1$, as $\tau_1\btr\omega=\tau_1\curvearrowright\omega$.
When $m>1$, we restate the recursive formula~\eqref{eq:btr_recursive} of the product $\btr$ on $\bk\{\huaF\}$ as
$$\tau_1\cdots \tau_m \btr \omega = \tau_2 \cdots \tau_m \btr (\tau_1 \curvearrowright \omega) + \sum_{i=2}^m \tau_2\cdots(\tau_1 \curvearrowright \tau_i)\cdots \tau_m \btr \omega.$$
By the definition \eqref{eq:left-graft} of $\curvearrowright$, it means that $\tau_1$ is attached to any node among $\tau_2,\dots,\tau_m,\omega$ first, and then if any one of $\tau_2,\dots,\tau_m$ is latter attached to the same node as $\tau_1$ does, it should be left to $\tau_1$. Moreover, each term in the above sum has only $m-1$ scions now, so the result follows by the induction hypothesis.
\end{proof}

\begin{exam}
\, $\scalebox{0.6}{\ab\,\aabb}\,\btr \scalebox{0.6}{\aabb} =
(\scalebox{0.6}{\ab}\curvearrowright \scalebox{0.6}{\aabb})\curvearrowright \scalebox{0.6}{\aabb}
+ \scalebox{0.6}{\aabb}\curvearrowright (\scalebox{0.6}{\ab}\curvearrowright \scalebox{0.6}{\aabb})
=\scalebox{0.6}{\aaabbababb}+\scalebox{0.6}{\aaaabbabbb}+2\,\scalebox{0.6}{\aaaabbbabb}
+2\,\scalebox{0.6}{\aaaaabbbbb}+\scalebox{0.6}{\aabaaabbbb}+\scalebox{0.6}{\aaabbaabbb}
+\scalebox{0.6}{\aaababbabb}+\scalebox{0.6}{\aaaababbbb}$.
\end{exam}

\begin{theorem}\label{thm:GL_antipode}
The antipode $S_{GL}$ of $\huaH_{GL}$ has the following cancellation-free formula,
\begin{equation}\label{eq:GL_antipode}
S_{GL}(\tau)=(-1)^m\sum_\pi B^+\big((\tau^-_{B_1}\btr \tau_{b_1})\cdots(\tau^-_{B_{|\pi|-1}}\btr \tau_{b_{|\pi|-1}})\big),\quad \forall \tau=B^+(\tau_1\cdots \tau_m)\in \huaO,
\end{equation}
where the sum ranges over all partitions $\pi$ of $[m]$ into a tuple $(B_1,\dots,B_{|\pi|})$ of possibly empty subsets such that
$B_{|\pi|}=\{b_1>\cdots>b_{|\pi|-1}\}$. Also, we use the notation $\tau^-_I=\tau_{i_1}\cdots \tau_{i_r}\in \huaF$ for any $I=\{i_1<\cdots< i_r\}\subseteq[m]$. In particular, $\tau^-_{\emptyset}=1$.
\end{theorem}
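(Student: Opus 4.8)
The plan is to transport the combinatorial antipode formula of Proposition~\ref{prop:antipode} across the Hopf algebra isomorphism $B^+$ furnished by Proposition~\ref{prop:GL_iso}. The key observation is that $B^+\colon\bk\{\huaF\}_\rhd\to\huaH_{GL}$ is an isomorphism of Hopf algebras with inverse the root-removal map $B^-$ (so that $B^+(B^-(\tau))=\tau$ for $\tau\in\huaO$); since any morphism of Hopf algebras automatically commutes with antipodes, this forces
$$S_{GL}(\tau)=S_{GL}\big(B^+(B^-(\tau))\big)=B^+\big(S_\rhd(B^-(\tau))\big)$$
for every $\tau=B^+(\tau_1\cdots\tau_m)\in\huaO$. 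In this way the problem collapses to evaluating $S_\rhd$ on the forest $B^-(\tau)=\tau_1\cdots\tau_m\in\bk\{\huaF\}$.

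First I would apply Proposition~\ref{prop:antipode} with the magma algebra taken to be $V=\bk\{\huaO\}$, so that the letters are the single trees $\tau_1,\dots,\tau_m\in\huaO$ and $\Ten(V)=\bk\{\huaF\}$. A direct substitution gives
$$S_\rhd(\tau_1\cdots\tau_m)=(-1)^m\sum_\pi(\tau^-_{B_1}\btr\tau_{b_1})\cdots(\tau^-_{B_{|\pi|-1}}\btr\tau_{b_{|\pi|-1}}),$$
where the product between consecutive factors is the concatenation in $\bk\{\huaF\}$ and $\tau^-_I$ is precisely the forest notation of the theorem. Each factor $\tau^-_{B_i}\btr\tau_{b_i}$ lies in $V=\bk\{\huaO\}$, since grafting a forest onto a single tree again returns a single tree, so the product over $i$ is a forest of $|\pi|-1$ trees. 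Applying the linear map $B^+$ to both sides and invoking the displayed intertwining relation then yields the asserted formula~\eqref{eq:GL_antipode} verbatim.

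It remains to justify the cancellation-free claim. For this I would invoke Proposition~\ref{prop:GL_graft}, according to which each factor $\tau^-_{B_i}\btr\tau_{b_i}$ expands as a sum of left graftings of the scions $\{\tau_j\mid j\in B_i\}$ onto the rootstock $\tau_{b_i}$, hence as a non-negative integer combination of ordered trees. Concatenating these factors produces a non-negative combination of ordered forests, and $B^+$ grafts each such forest onto a fresh root, again with non-negative multiplicities. Since the overall sign $(-1)^m$ is common to every partition $\pi$, every ordered tree occurring in the total expansion carries a coefficient of the single sign $(-1)^m$; trees arising from distinct $\pi$ therefore accumulate rather than cancel.

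I expect the only genuine subtlety to be this final sign-definiteness step, namely checking that passing through $B^+$ preserves the positivity of the grafting expansions and that the outer sign $(-1)^m$ is uniform across the sum over $\pi$. The algebraic skeleton, antipode transport followed by substitution, is immediate from the earlier results; the combinatorial content that distinguishes this formula from Takeuchi's alternating sum resides entirely in the cancellation-free grafting interpretation guaranteed by Proposition~\ref{prop:GL_graft}.
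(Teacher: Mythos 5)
Your proposal is correct and follows essentially the same route as the paper: transport the formula of Proposition~\ref{prop:antipode} through the Hopf algebra isomorphism $B^+$ of Proposition~\ref{prop:GL_iso} via $S_{GL}=B^+S_\rhd B^-$, then deduce cancellation-freeness from the positivity of the expansion of each factor $\tau^-_{B_i}\btr\tau_{b_i}$ (the paper cites Eq.~\eqref{eq:btr_comb} where you cite its grafting reformulation, Proposition~\ref{prop:GL_graft}, which amounts to the same thing). Your extra care about the uniform sign $(-1)^m$ and about each factor landing in $\bk\{\huaO\}$ is a correct and slightly more explicit rendering of what the paper leaves implicit.
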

\begin{proof}
According to Proposition~\ref{prop:GL_iso}, $B^+$ is a Hopf algebra isomorphism from $\bk\{\huaF\}_\rhd$ to $\huaH_{GL}$, so $S_{GL}B^+=B^+S_\rhd$, namely $S_{GL}=B^+S_\rhd B^-$.
Hence, the desired formula of $S_{GL}$ is obtained as an equivalent form of Eq.~\eqref{eq:antipode} for $S_\rhd$.

Furthermore, the combinatorial formula~\eqref{eq:btr_comb} implies that each term $(\tau^-_{B_1}\btr \tau_{b_1})\cdots(\tau^-_{B_{|\pi|-1}}\btr \tau_{b_{|\pi|-1}})$ is homogeneous in $\bk\{\huaF\}$, of degree $|\pi|-1$ and with positive integer coefficients. So Eq.~\eqref{eq:GL_antipode} is a cancellation-free formula for $S_{GL}$.
\end{proof}

\begin{remark}
The graded Hopf dual of  $\bk\{\huaF\}_\rhd$ is the Munthe-Kaas-Wright Hopf algebra $$\huaH_N=(\bk\{\huaF\},m_\shuffle,\Delta_N,S_N)$$
of ordered forests, as the natural setting of the Lie-Butcher series. In \cite[Prop.~3]{MW}, the authors gave a nonrecursive formula for the antipode $S_N$ of $\huaH_N$. Our closed formula~\eqref{eq:GL_antipode} of $S_{GL}$ serves as its graded-dual version.
\end{remark}

At last we give a simple illustration how to compute the antipode $S_{GL}$ via Eq.~\eqref{eq:GL_antipode}.
\begin{exam}
Let $\tau=\scalebox{0.6}{\aababaabbb}$. That is, $\tau_1=\tau_2=\scalebox{0.6}{\ab}$, $\tau_3=\scalebox{0.6}{\aabb}\,$ such that $\tau=B^+(\tau_1\tau_2\tau_3)$.

Totally $10$ appropriate partitions of $[3]$ with the corresponding sum of trees in the RHS of~\eqref{eq:GL_antipode} are listed as follows.
\[\begin{array}{llll}
(\emptyset,\emptyset,\emptyset,\{3,2,1\}) &  (1\btr\tau_3)(1\btr\tau_2)(1\btr\tau_1)=\tau_3\tau_2\tau_1
&\rightsquigarrow& \scalebox{0.6}{\aaabbababb}\\
(\{1\},\emptyset,\{3,2\}) &
(\tau_1\btr\tau_3)(1\btr\tau_2)=(\tau_1\curvearrowright\tau_3)\tau_2
&\rightsquigarrow&  \scalebox{0.6}{\aaaabbbabb}+\scalebox{0.6}{\aaababbabb} \\[.3em]
(\emptyset,\{1\},\{3,2\}) &
(1\btr\tau_3)(\tau_1\btr\tau_2)=\tau_3(\tau_1\curvearrowright\tau_2)  &\rightsquigarrow &\scalebox{0.6}{\aaabbaabbb}\\
(\{2\},\emptyset,\{3,1\}) &
(\tau_2\btr\tau_3)(1\btr\tau_1)=(\tau_2\curvearrowright\tau_3)\tau_1
&\rightsquigarrow  &\scalebox{0.6}{\aaaabbbabb}+\scalebox{0.6}{\aaababbabb}\\[.3em]
(\emptyset,\{2\},\{3,1\}) &
(1\btr\tau_3)(\tau_2\btr\tau_1)=\tau_3(\tau_2\curvearrowright\tau_1) & \rightsquigarrow &\scalebox{0.6}{\aaabbaabbb}\\
(\{3\},\emptyset,\{2,1\}) &
(\tau_3\btr\tau_2)(1\btr\tau_1)=(\tau_3\curvearrowright\tau_2)\tau_1
&\rightsquigarrow & \scalebox{0.6}{\aaaabbbabb}\\
(\emptyset,\{3\},\{2,1\}) &
(1\btr\tau_2)(\tau_3\btr\tau_1)=\tau_2(\tau_3\curvearrowright\tau_1)
&\rightsquigarrow & \scalebox{0.6}{\aabaaabbbb}\\
(\{1,2\},\{3\}) &
\tau_1\tau_2\btr\tau_3=(\tau_1\curvearrowright\tau_2)\curvearrowright\tau_3
+\tau_2\curvearrowright(\tau_1\curvearrowright\tau_3)
&\rightsquigarrow & 2\,\scalebox{0.6}{\aaaabbabbb}+2\,\scalebox{0.6}{\aaabaabbbb}+2\,\scalebox{0.6}{\aaaaabbbbb}
+\scalebox{0.6}{\aaabababbb}+\scalebox{0.6}{\aaaababbbb}\\
(\{1,3\},\{2\}) &
\tau_1\tau_3\btr\tau_2=(\tau_1\curvearrowright\tau_3)\curvearrowright\tau_2
+\tau_3\curvearrowright(\tau_1\curvearrowright\tau_2)
&\rightsquigarrow  & \scalebox{0.6}{\aaaabbabbb}+2\,\scalebox{0.6}{\aaaaabbbbb}+\scalebox{0.6}{\aaaababbbb}\\
(\{2,3\},\{1\}) &
\tau_2\tau_3\btr\tau_1=(\tau_2\curvearrowright\tau_3)\curvearrowright\tau_1
+\tau_3\curvearrowright(\tau_2\curvearrowright\tau_1)
&\rightsquigarrow  & \scalebox{0.6}{\aaaabbabbb}+2\,\scalebox{0.6}{\aaaaabbbbb}+\scalebox{0.6}{\aaaababbbb}.
\end{array}\]
Now adding up all of them, we have
$$S_{GL}(\scalebox{0.6}{\aababaabbb})= -\,\scalebox{0.6}{\aaabbababb}
-3\,\scalebox{0.6}{\aaaabbbabb}-\scalebox{0.6}{\aabaaabbbb}
-2\scalebox{0.6}{\aaababbabb}-2\,\scalebox{0.6}{\aaabbaabbb}
-4\,\scalebox{0.6}{\aaaabbabbb}-2\,\scalebox{0.6}{\aaabaabbbb}-6\,\scalebox{0.6}{\aaaaabbbbb}
-\scalebox{0.6}{\aaabababbb}-3\scalebox{0.6}{\aaaababbbb}.$$
\end{exam}

\bigskip
In the end, we give a conclusion. The main contributions of this paper include:
\begin{enumerate}[1.]
\item
the derivation of an explicit combinatorial formula for the antipode of the sub-adjacent Hopf algebra $\U(\frakg)_\rhd$ associated to a post-Lie algebra $(\frakg,[\cdot,\cdot]_\frakg,\rhd)$;
\item
giving a closed inverse formula for the Guin-Oudom isomorphism, or equivalently expressing monomials in $\U(\frakg)$ in terms of the Grossman-Larson product $\circ $ conversely;
\item
applying these constructions to obtain a cancellation-free antipode formula for the Grossman-Larson Hopf algebra of ordered trees with a tree-grafting interpretation.
\end{enumerate}
In subsequent researches, we will explore potential applications of our result to the study of formal post-Lie groups~\cite{BGST} and also the post-Lie Magnus expansion~\cite{EMQ}.

\bigskip
 \noindent
{\bf Acknowledgements.} We would like to thank the referee for helpful comments. This work is supported by National Natural Science Foundation of China (12071094, 12171155), and Basic and Applied Basic Research Foundation of Guangdong Province (2026A1515012750).

\bibliographystyle{amsplain}

\end{document}